\documentclass[a4paper,10pt]{article}

\def\zibreport{1}
\def\longtitle{Solving Quadratic Programs to High Precision using Scaled Iterative Refinement}
\def\shortfunding{This project has received funding from the European Research Council (ERC) under the European Union’s Horizon 2020 research and innovation programme (grant agreement No 647573), from the Deutsche Forschungsgemeinschaft (DFG, German Research Foundation) -- 314838170, GRK 2297 MathCoRe, and from the German Federal Ministry of Education and Research as part of the \emph{Research Campus MODAL} (BMBF grant number 05M14ZAM), all of which is gratefully acknowledged.}
\def\shortauthors{Tobias Weber, Sebastian Sager, Ambros Gleixner}

\usepackage{geometry}
\usepackage[colorlinks,citecolor=blue,linkcolor=blue,urlcolor=black,
  pdftitle={\longtitle},
  pdfauthor={\shortauthors},
  pdfsubject={\longtitle}]{hyperref}

\usepackage{amsmath,amsthm}
\theoremstyle{plain}
\newtheorem{theorem}{Theorem}

\usepackage{titling}
\setlength\droptitle{-2em}
\pretitle{\begin{center}\linespread{1.05}\Large}
\posttitle{\par\end{center}\vskip 0.5em}
\preauthor{\begin{center}\linespread{1.15}\large \lineskip 0.6em\begin{tabular}[t]{c}}
\postauthor{\end{tabular}\par\end{center}\vskip 0.6em}
\predate{\begin{center}}
\postdate{\par\end{center}}
\setlength\thanksmarkwidth{.75em}
\setlength\thanksmargin{0em}
\thanksmarkseries{arabic}

\usepackage[]{titlesec}
\usepackage{etoolbox}
\makeatletter
\patchcmd{\ttlh@hang}{\parindent\z@}{\parindent\z@\leavevmode}{}{}
\patchcmd{\ttlh@hang}{\noindent}{}{}{}
\makeatother
\titleformat{\section}
{\normalfont\large\bfseries}{\thesection}{0.9ex}{}
\titleformat{\subsection}
{\normalfont\normalsize\bfseries}{\thesubsection}{0.9ex}{}
\titleformat{\subsubsection}
{\normalfont\normalsize\upshape}{\thesubsubsection}{0.9ex}{}
\titleformat{\paragraph}[runin]
{\normalfont\normalsize\itshape}{\theparagraph}{1em}{}
\titleformat{\subparagraph}[runin]
{\normalfont\normalsize\itshape}{\theparagraph}{1em}{}
\titlespacing*{\section}     {0pt}{21dd plus 8pt minus 4pt}{10.5dd}
\titlespacing*{\subsection}   {0pt}{21dd plus 8pt minus 4pt}{10.5dd}
\titlespacing*{\subsubsection}{0pt}{19dd plus 8pt minus 4pt}{10.5dd}
\titlespacing*{\paragraph}   {0pt}{13pt plus 8pt minus 4pt}{1em}
\titlespacing*{\subparagraph}   {0pt}{13pt plus 8pt minus 4pt}{1em}

\usepackage[numbers]{natbib}

\usepackage{graphicx}
\usepackage{algorithm}
\usepackage[noend]{algpseudocode}
\usepackage{longtable}
\usepackage{color}
\usepackage{pdflscape}
\usepackage{bbm}
\usepackage{amssymb}
\usepackage{xspace}
\usepackage{pgfplots}
\usepackage{booktabs}
\usepackage{multirow}
\usepackage{colortbl}
\usepackage{booktabs}		\pgfplotsset{compat=1.5.1}
\usepackage[textsize=small]{todonotes}

\newcommand{\mline}[1]{\textit{Line~#1}\xspace}
\newcommand{\mlines}[1]{\textit{Lines~#1}\xspace}

\newtheorem{assumption}[theorem]{Assumption}
\newtheorem{mylemma}[theorem]{Lemma}
\newtheorem{mydefinition}[theorem]{Definition}
\newtheorem{myexample}[theorem]{Example}
\newtheorem{myremark}[theorem]{Remark}
 
\ifthenelse{\zibreport = 1}{
  \let\pdfoutorg\pdfoutput
  \let\pdfoutput\undefined
  \usepackage{zibtitlepage}
  \let\pdfoutput\pdfoutorg

  \ZTPTitle{\longtitle}
  \ZTPAuthor{\shortauthors}
  \ZTPPreprint
  \ZTPNumber{18-04}
  \ZTPMonth{March}
  \ZTPYear{2018}
  \ZTPInfo{This report has been published in \textit{Mathematical Programming Computation}. Please cite as \textit{Tobias Weber, Sebastian Sager, Ambros Gleixner, Solving Quadratic Programs to High Precision using Scaled Iterative Refinement, Mathematical Programming Computation, 11:421--455, 2019, \href{https://doi.org/10.1007/s12532-019-00154-6}{doi:10.1007/s12532-019-00154-6}}.}
}{}

\begin{document}

\title{Solving Quadratic Programs to High Precision\\ using Scaled Iterative Refinement}

\author{Tobias Weber  \thanks{\itshape Otto von Guericke University, Institute of Mathematical Optimization,
    Universit\"atsplatz 2, 02-204, 39106 Magdeburg, Germany,
    \nolinkurl{{tobias.weber,sager}@ovgu.de}\vspace*{0.5ex}},
  Sebastian Sager\thanksmark{1},
  Ambros Gleixner  \thanks{\itshape Zuse Institute Berlin, Department of Mathematical Optimization,
    Takustr.~7, 14195~Berlin, Germany,
    \nolinkurl{gleixner@zib.de}\vspace*{0.5ex}
    \footnoterule\upshape\noindent
    \shortfunding
  }}

\date{March 19, 2018}

\ifthenelse{\zibreport = 1}{\zibtitlepage}{}

\newgeometry{left=38mm,right=38mm,top=35mm}

\maketitle

\paragraph{\bf Abstract}

Quadratic optimization problems (QPs) are ubiquitous, and solution algorithms have matured to a reliable technology. However, the precision of solutions is usually limited due to the underlying floating-point operations. This may cause inconveniences when solutions are used for rigorous reasoning. We contribute on three levels to overcome this issue.

First, we present a novel refinement algorithm to solve QPs to arbitrary precision. It iteratively solves refined QPs, assuming a floating-point QP solver oracle. We prove linear convergence of residuals and primal errors.
Second, we provide an efficient implementation, based on \texttt{SoPlex} and \texttt{qpOASES} that is publicly available in source code.
Third, we give precise reference solutions for the Maros and M{\'e}sz{\'a}ros benchmark library.

\paragraph{\bf Keywords} Quadratic Programming
$\cdot$ Iterative Refinement
$\cdot$ Active Set
$\cdot$  Rational Calculations

\paragraph{\bf Mathematics Subject Classification} 90C20 $\cdot$ 90-08 $\cdot$ 90C55

\section{Introduction} \label{sec:intro}

Quadratic optimization problems (QPs) are optimization problems with a quadratic objective function and linear constraints. They are of interest directly, e.g., in portfolio optimization or support vector machines~\cite{bennett2000}. They also occur as subproblems in sequential quadratic programming, mixed-integer quadratic programming, and nonlinear model predictive control. Efficient algorithms are usually of active set, interior point, or parametric type. Examples of QP solvers are \texttt{BQPD}~\cite{fletcher1998user}, \texttt{CPLEX}~\cite{cplex2016}, \texttt{Gurobi}~\cite{gurobimanual2017}, \texttt{qp\_solve}~\cite{goldfarb1982dual}, \texttt{qpOASES}~\cite{ferreau2014qpoases}, and \texttt{QPOPT}~\cite{gill1995user}.
These QP solvers have matured to reliable tools and can solve convex problems with many thousands, sometimes millions of variables. However, they calculate and check the solution of a QP in floating-point arithmetic. Thus, the claimed precision may be violated and in extreme cases optimal solutions might not be found.
This may cause inconveniences, especially when solutions are used for rigorous reasoning.

One possible approach is the application of interval arithmetic. It allows to include uncertainties as lower and upper bounds on the modeling level, see~\cite{Hladik2012} for a survey for the case of linear optimization. As a drawback, all internal calculations have to be performed with interval arithmetic. Hence standard solvers can not be used any more, the computation times increase, and solutions may be very conservative.

We are only aware of one advanced algorithm that solves QPs exactly over the rational numbers. It is designed to tackle problems from computational geometry with a small number of constraints or variables, \cite{gartner2000efficient}.
Based on the classical QP simplex method~\cite{wolfe1959}, it replaces critical calculations inside the QP solver by their rational counterparts.  Heuristic decisions that do not affect the correctness of the algorithm are performed in fast floating-point arithmetic.

In this paper we propose a novel algorithm that can use efficient floating-point QP solvers as a black box. Our method is inspired by iterative refinement, a standard procedure to improve the accuracy of an approximate solution for a system of linear equalities, \cite{Wilkinson1959}: The residual of the approximate solution is calculated, the linear system is solved again with the residual as a right-hand side, and the new solution is used to refine the old solution, thus improving its accuracy. 
A generalization of this idea to the solution of optimization problems needs to address several difficulties: most importantly, the presence of inequality constraints; the handling of optimality conditions; and the numerical tolerances that floating-point solvers can return in practice.

For LPs this has first been developed in~\cite{gleixner2012}.
The approach refines  primal-dual solutions of the Karush-Kuhn-Tucker (KKT) conditions and comprehends scaling and calculations in rational arithmetic.
We generalize further and discuss the specific issues due to the presence of a quadratic objective function.
The fact that the general approach carries over from LP to QP was remarked in~\cite{gleixner2015}. Here we provide the details, provide a general lemma showing how the residuals bound the primal and dual iterates, and analyze the computational behavior of the algorithm based on an efficient implementation that is made publicly available in source code and can be used freely for research purposes.

The paper is organized as follows. In Section~\ref{sec:IRQP} we define and discuss QPs and their refined and scaled counterparts. We give one illustrating and motivating example for scaling and refinement. In Section~\ref{sec:algorithm} we formulate an algorithm and prove its convergence properties. In Section~\ref{sec:implementation} we consider performance issues and describe how our implementation based on \texttt{SoPlex} and \texttt{qpOASES} can be used to calculate solutions for QPs with arbitrary precision. In Section~\ref{sec:NumericalExperiments} we discuss run times and provide solutions for the Maros and M{\'e}sz{\'a}ros benchmark library, \cite{maros1999repository}. We conclude in Section~\ref{sec:Conclusion} with a discussion of the results and give directions for future research and applications of the algorithm.

In the following we will use $\|\cdot\|$ for the maximum norm $\|\cdot\|_\infty$. The maximal entry of a vector $\max_i\{v_i\}$ is written as $\max\{v\}$. Inequalities $a\leq b$ for $a,b\in \mathbb{Q}^n$ hold componentwise.

\section{Refinement and Scaling of Quadratic Programs}
\label{sec:IRQP}

In this section we collect some basic definitions and results that will be of use later on.
We consider convex optimization problems of the following form.
\begin{mydefinition}[Quadratic optimization problem (QP)]
Let  a symmetric matrix $Q \in \mathbb{Q}^{n\times n}$, a matrix $A \in \mathbb{Q}^{m\times n}$, and vectors $c \in \mathbb{Q}^n, b \in \mathbb{Q}^m, l \in \mathbb{Q}^n$ be given. We consider the quadratic optimization problem (QP) 
\begin{equation}\label{P}\tag{$P$}
\begin{array}{rrclcl}
\displaystyle \min_{x} & \multicolumn{3}{l}{\frac{1}{2} x^T Q x + c^T x} \\
\textrm{s.t.} & A x & = & b \\
& x & \geq & l & &  \\
\end{array}
\end{equation}
assuming that (\ref{P}) is feasible and bounded, and $Q$ is positive semi-definite on the feasible set.
The rounded version of this convex rational data QP will be denoted as $(\tilde{P})$.
\end{mydefinition}
A point $x^* \in \mathbb{Q}^n$ is a global optimum of (\ref{P}) if and only if it satisfies the Karush-Kuhn-Tucker (KKT) conditions, \cite{Dostal2009}, i.e., if multipliers $y^* \in \mathbb{Q}^m$ exist such that
\begin{subequations} \label{eq:kkt}
\begin{eqnarray}
 A x^* & = & b \label{eq:kkta} \\
 x^*  & \geq & l \label{eq:kktb} \\
 A^Ty^* & \leq & Qx^* + c \label{eq:kktc} \\
 (Qx^* + c - A^Ty^*)^T(x^*-l) & = & 0. \label{eq:kktd} 
\end{eqnarray}
\end{subequations}
The pair $(x^*,y^*)$ is then called KKT pair of (\ref{P}). Primal feasibility is given by (\ref{eq:kkta}--\ref{eq:kktb}), dual feasibility by \eqref{eq:kktc}, and complementary slackness by \eqref{eq:kktd}. Refinement of this system of linear (in-)equalities is equivalent to the refinement of \eqref{P}.

\begin{mydefinition}[Refined QP]
Let the QP (\ref{P}), scaling factors $\Delta_P,\Delta_D \in \mathbb{Q}_+$ and vectors $x^* \in \mathbb{Q}^n,y^* \in \mathbb{Q}^m$ be given. We define the refined QP as
\begin{equation}\label{PP}\tag{$P^{\Delta}$}
\begin{array}{rrclcl}
\displaystyle \min_{x} & \multicolumn{3}{l}{\frac{1}{2} x^T \frac{\Delta_D}{\Delta_P} Q x + (\Delta_D\hat{c})^T x} \\
\textrm{s.t.} & A x & = & \Delta_P\hat{b} \\
& x & \geq & \Delta_P\hat{l}, & &  \\
\end{array}
\end{equation}
where $\hat{c}=Qx^*+c-A^Ty^*$, $\hat{b}=b-Ax^*$, and $\hat{l}=l-x^*$.
The rounded version of this refined and scaled rational data QP will be denoted as ($\tilde{P}^{\Delta}$).
\end{mydefinition}
The following theorem is the basis for our theoretical and algorithmic approaches.
It is a generalization of iterative refinement for LP and was formulated and proven in~\cite[Theorem 5.2]{gleixner2015}.
Again, primal feasibility refers to (\ref{eq:kkta}--\ref{eq:kktb}), dual feasibility to \eqref{eq:kktc}, and complementary slackness to \eqref{eq:kktd}.
\begin{theorem}[QP Refinement]\label{th:refinement}
Let the QP (\ref{P}), scaling factors $\Delta_P,\Delta_D \in \mathbb{Q}_+$, vectors $x^* \in \mathbb{Q}^n,y^* \in \mathbb{Q}^m$, and the refined QP \eqref{PP} be given.
Then for any $\hat{x} \in \mathbb{R}^n$, $\hat{y} \in \mathbb{R}^m$ and tolerances $\epsilon_P,\epsilon_D,\epsilon_S \geq 0$:
\begin{enumerate}
\item $\hat{x}$ is primal feasible for $($\ref{PP}$)$ within an absolute tolerance $\epsilon_P$ if and only if $x^* + \frac{\hat{x}}{\Delta_P}$ is primal feasible for $($\ref{P}$)$ within $\epsilon_P/\Delta_P$.
\item $\hat{y}$ is dual feasible for $($\ref{PP}$)$ within an absolute tolerance $\epsilon_D$ if and only if $y^* + \frac{\hat{y}}{\Delta_D}$ is dual feasible for $($\ref{P}$)$ within $\epsilon_D/\Delta_D$.
\item $\hat{x}$, $\hat{y}$ satisfy complementary slackness for $($\ref{PP}$)$ within an absolute tolerance $\epsilon_S$ if and only if $y^* + \frac{\hat{y}}{\Delta_D}$, $x^* + \frac{\hat{x}}{\Delta_P}$ satisfy complementary slackness for $($\ref{P}$)$ within $\epsilon_S/(\Delta_P\Delta_D)$.
\end{enumerate}
\end{theorem}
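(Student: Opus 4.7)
The theorem is a structural statement: the refined QP is obtained from \eqref{P} by a translation of the primal variable by $x^*$ and of the dual variable by $y^*$, followed by axis-parallel rescalings of magnitudes $\Delta_P$ and $\Delta_D$. My plan is therefore to verify each of the three KKT components separately by substituting $x := x^* + \hat{x}/\Delta_P$ and $y := y^* + \hat{y}/\Delta_D$, computing the residual for \eqref{P} and identifying it, up to the announced scaling factor, with the residual for \eqref{PP}. Each of the three statements is then just an equality of residuals (one scalar factor difference), and the claim ``within tolerance $\epsilon$'' becomes an equivalent claim ``within tolerance $\epsilon$ times that factor,'' making the biconditional immediate.

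For part~1, I would compute $A(x^*+\hat{x}/\Delta_P)-b=\tfrac{1}{\Delta_P}\bigl(A\hat{x}-\Delta_P\hat{b}\bigr)$ using $\hat{b}=b-Ax^*$, and similarly $(x^*+\hat{x}/\Delta_P)-l=\tfrac{1}{\Delta_P}(\hat{x}-\Delta_P\hat{l})$ using $\hat{l}=l-x^*$. Taking maximum norms yields that the primal residual of the original QP equals $1/\Delta_P$ times that of the refined QP, which is exactly the claim. For part~2, I would note that the dual feasibility residual of \eqref{PP} is $A^T\hat{y}-\tfrac{\Delta_D}{\Delta_P}Q\hat{x}-\Delta_D\hat{c}$; plugging in $\hat{c}=Qx^*+c-A^Ty^*$ and the substitutions for $x$ and $y$ gives $A^Ty-Qx-c=\tfrac{1}{\Delta_D}\bigl(A^T\hat{y}-\tfrac{\Delta_D}{\Delta_P}Q\hat{x}-\Delta_D\hat{c}\bigr)$, so the dual residual scales by $1/\Delta_D$. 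For part~3, both factors in the complementary-slackness product $(Qx+c-A^Ty)^T(x-l)$ appear in parts 1 and 2, so multiplying the two scaling relations gives a $1/(\Delta_P\Delta_D)$ factor, which is precisely the announced transformation of the slackness tolerance.

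The only subtlety, rather than an obstacle, is being careful about the direction of inequalities and what ``within tolerance'' means for the three different kinds of conditions (an equality residual, a one-sided inequality residual, and an absolute value of a scalar), so that the ``if and only if'' goes through cleanly. Since every intermediate relation I used above is an identity rather than an estimate, the biconditional in each part follows automatically: multiplying a nonnegative residual by a positive scalar $\Delta_P$, $\Delta_D$, or $\Delta_P\Delta_D$ preserves the inequality in both directions. No convexity or KKT-specific arguments are needed beyond the definitions; the whole statement reduces to checking that the affine change of variables commutes in the expected way with each of the three residual expressions.
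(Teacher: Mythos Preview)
Your approach is correct and is the natural one: the refined QP is obtained from \eqref{P} by an affine change of primal and dual variables, and each of the three KKT residuals transforms by a fixed positive scalar, so the biconditionals follow immediately. Note, however, that the paper does not actually give its own proof of this theorem; it cites \cite[Theorem~5.2]{gleixner2015} for both the formulation and the proof. The argument there proceeds exactly as you outline---direct substitution and identification of residuals---so your plan matches the intended proof.
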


For illustration, we investigate the following example.
\begin{myexample}[QP Refinement]\label{example}
\normalfont
Consider the QP with two variables
\begin{equation*}
\begin{array}{rrclcl}
\displaystyle \min_{x} & \multicolumn{3}{l}{\frac{1}{2} (x_1^2 + x_2^2) + x_1 + (1+10^{-6})x_2} \\
\textrm{s.t.} & x_1 + x_2 & = & 10^{-6} \\
& x_1,x_2 & \geq & 0. & &  \\
\end{array}
\end{equation*}
An approximate solution to a tolerance of $10^{-6}$ is $x^*_1=x^*_2=0$ with dual multiplier $y^*=1$. This solution is slightly primal and dual infeasible, but the solver can not recognize this on this scale. The situation is depicted in Figure~\ref{fig:qpex1} on the left.

\begin{figure}[h!!!]
\caption{Illustration of nominal QP (left) and of refined QP (right) for Example~\ref{example}. The scaled (and shifted) QP \eqref{PP} works like a zoom (and shift) for \eqref{P}, allowing to correct the solution $x^*$ (orange dot) from $(0,0)$ to $(10^{-6},0)$. \label{fig:qpex1}}
\vspace{0.5cm}
\begin{tikzpicture}
\begin{axis}[ 
    axis lines = middle,
    axis line style={->},
    ymin=-0.9, ymax=1.1,
    xmin=-1.1, xmax=0.9,
    xlabel={$x_1$},
    ylabel={$x_2$},
    axis equal image
]
\addplot[blue, domain=-1:1] {-x}
   node [pos=0.1, above right] {$x_1+x_2=10^{-6}$};
\addplot[orange, mark=*, only marks] coordinates {(0,0)};
\node[label={20:$x^*$}] (orange) at (axis cs:0,0) {};
\draw[brown] (axis cs:-1,-1) circle [radius=1.41];
\node[brown] (levelcurve) at (axis cs:-0.5,0.1) {level curve};
\end{axis}
\end{tikzpicture}
\hfill
\begin{tikzpicture}
\begin{axis}[ 
    axis lines = middle,
    axis line style={->},
    ymin=-0.9, ymax=1.1,
    xmin=-0.1, xmax=1.6,
    xlabel={$x_1$},
    ylabel={$x_2$},
    axis equal image
]
\addplot[blue, domain=0.0:1.5] {-x+1}
   node [pos=0.3, above right] {$x_1+x_2=1$};
\addplot[orange, mark=*, only marks] coordinates {(0,0)};
\addplot[purple, mark=*, only marks] coordinates {(1,0)};
\node[label={20:$x^*$}] (orange) at (axis cs:0,0) {};
\node[label={20:$\hat{x}$}] (purple) at (axis cs:1,0) {};
\draw[dashed,-latex] (orange) to [bend right] (purple);
\draw[brown] (axis cs:0,-1) circle [radius=1.41];
\node[brown] (levelcurve) at (axis cs:1,-0.6) {level curve};
\end{axis}
\end{tikzpicture}
\end{figure}
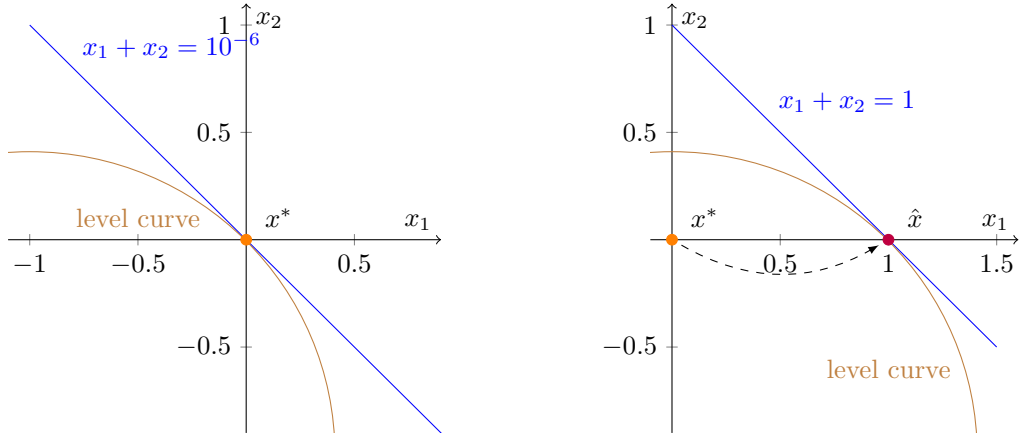

The point $x^*$ seems to be the optimal solution satisfying the equality constraint and the brown circle representing the level curve of the objective function indicates the optimality.
The corresponding violations are $\hat{l}=(0,0)^T$, $\hat{b}=10^{-6}$, and $\hat{c}=(0,10^{-6})^T$. The refined QP is
\begin{equation*}
\begin{array}{rrclcl}
\displaystyle \min_{x} & \multicolumn{3}{l}{\frac{1}{2} (x_1^2 + x_2^2) + x_2} \\
\textrm{s.t.} & x_1 + x_2 & = & 1 \\
& x_1,x_2 & \geq & 0 & &  \\
\end{array}
\end{equation*}
with scalings $\Delta_P=\Delta_D=10^6$. The optimal solution to this problem is $\hat{x}_1=1,\hat{x}_2=0$ with multiplier $\hat{y}=1$. This situation is depicted in Figure~\ref{fig:qpex1}, right. The point $x^*$ is obviously not the optimal solution and the solution to the refined problem is $\hat{x}$. The refined solution is $x^*+\hat{x}/\Delta_P=(10^{-6},0)^T$ and $y^*+\hat{y}/\Delta_D=1+10^{-6}$. These values are primal and dual feasible in the original problem.
\end{myexample}

\section{Iterative Refinement for Quadratic Programming}
\label{sec:algorithm}

To solve quadratic programs to arbitrary, \textit{a priori} specified precisions,
we apply the refinement idea from the previous section iteratively in Algorithm~\ref{IQPR}.
\begin{algorithm}[h!!!]
\caption{Iterative QP Refinement (IQPR)}\label{IQPR}
\begin{algorithmic}[1]
\item \textbf{Input:} (\ref{P}) in rational precision, termination tolerances $\epsilon_P$ and $\epsilon_D$ and $\epsilon_S$, scaling limit $\alpha > 1$, iteration limit $k_{max}$\item \textbf{Initialization:} $\Delta_0 \gets 1$, solve ($\tilde{P}$) approximately, save optimal $(x_1,y_1)$\For{$k \gets 1,2,...,k_{max}$}
\State $\hat{b} \gets b-Ax_k$
\State $\hat{l} \gets l-x_k$
\State $\delta_P \gets \max\left\{0,\|\hat{b}\|,\max\{\hat{l}\}\right\}$
\State $\hat{c} \gets Qx_k+c-A^Ty_k$
\State $\delta_D \gets \max\left\{0,\max\{-\hat{c}\}\right\}$
\State $\Delta_k \gets \min\left\{\delta_P^{-1},\delta_D^{-1},\alpha \Delta_{k-1}\right\}$
\State $\delta_S \gets \sum_i \hat{l}_i\hat{c}_i$
\If{$\delta_P \leq \epsilon_P$ and $\delta_D \leq \epsilon_D$ and $\delta_S \leq \epsilon_S$}
\State break
\Else
\State solve ($\tilde{P}^{\Delta_k}$) approximately
\State $(x^*,y^*) \gets$ KKT pair returned as optimal
\State $(x_{k+1},y_{k+1}) \gets (x_k,y_k) + \frac{(x^*,y^*)}{\Delta_k}$
\EndIf
\EndFor
\item \textbf{Return:} $(x_k,y_k)$\end{algorithmic}
\end{algorithm}

Algorithm~\ref{IQPR} expects QP data ($Q,A,c,b,l$) in rational precision, primal and dual termination tolerances~($\epsilon_P,\epsilon_D$), complementary slack termination tolerance~($\epsilon_S$), scaling limit $\alpha > 1$ and iteration limit $k_{max}$. First the rounded QP ($\tilde{P}$) is solved with a floating-point QP solver oracle which returns optimal primal and dual solution vectors (\mline{2}).
In \mline{3} the main loop begins. The primal violations for constraints ($\hat{b}$, \mline{4}) and for bounds ($\hat{l}$, \mline{5}) are calculated. The maximal primal violation is saved as $\delta_P$ in \mline{6}. The reduced cost vector~$\hat{c}$ and its maximal violation $\delta_D$ are calculated in \mlines{7--8}. In \mline{9} the scaling factor $\Delta_k$ is chosen as the maximum of $\alpha\Delta_{k-1}$ and the inverses of the violations $\delta_P$ and $\delta_D$. 
The complementary slack violation~$\delta_S$ is calculated in \mline{10}.
If the primal, dual and complementary slack violations are already below the specified tolerances the loop is stopped (\mlines{11--12}) and the optimal solution is returned (\mline{17}).
Else (\mline{13}) the refined, scaled, and rounded QP ($\tilde{P}^{\Delta_k}$) is solved with the floating-point QP oracle in \mline{14}. We save the floating-point optimal primal and dual solution vectors (\mline{15}). We scale and add them to the current iterate ($x_k,y_k$) to obtain ($x_{k+1},y_{k+1}$), \mline{16}. 

Note that all calculations except the expensive solves of the QPs are done in rational precision. Algorithm~\ref{IQPR} uses only one scaling factor $\Delta_k$ for primal and dual infeasibility to avoid the scaling of the quadratic term of the objective. Keeping this matrix and the constraint matrix~$A$ fixed gives QP solvers the possibility to reuse the internal factorization of the basis system between refinements, as the transformation does not change the basis. This allows to use efficient hotstart techniques for all solves after the initial solve.

To investigate the performance of the algorithm we make, in analogy with the LP case~\cite[Assumption 2.9]{gleixner2012}, the following assumption.
\begin{assumption}[QP solver accuracy]\label{assumption}
We assume that there exists $\epsilon \in [0,1)$ and $\sigma \geq 0$ such that the QP solver oracle returns for all rounded QPs ($\tilde{P}^{\Delta_k}$) solutions $(\bar{x},\bar{y})$ that satisfy
\begin{equation*}
\begin{array}{rcl}
 \|A \bar{x} - \Delta_k\hat{b}\| & \leq & \epsilon \\
 \bar{x}  & \geq & \Delta_k\hat{l} - \mathbbm{1} \epsilon \\
 Q\bar{x} + \Delta_k\hat{c} & \geq & A^T\bar{y} - \mathbbm{1} \epsilon \\
 |(Q\bar{x} + \Delta_k\hat{c} - A^T\bar{y})^T(\bar{x}-\Delta_k\hat{l})| & \leq & \sigma \\
\end{array}
\end{equation*}
in the rational QPs ($P^{\Delta_k}$).
\end{assumption}
Note that this $\epsilon$ corresponds to a termination tolerance passed to a floating-point solver, while the algorithm uses overall termination tolerances $\epsilon_P$ and $\epsilon_D$ and a scaling limit $\alpha > 1$ per iteration. We denote $\tilde{\epsilon}=\max\{1/\alpha,\epsilon\}$.

\begin{mylemma}[Termination and residual convergence] \label{le:rightside}
Algorithm~\ref{IQPR} applied to a primal and dual feasible QP (\ref{P}) and using a QP solver that satisfies Assumption~\ref{assumption} will terminate in at most
\begin{equation}\label{eq:iterationnumber}
 k_{max} = \max\left\{\;\log(\epsilon_P)/\log(\tilde{\epsilon}),\;\log(\epsilon_D)/\log(\tilde{\epsilon}),\;\log(\epsilon_S/\sigma)/(2\log(\tilde{\epsilon}))+1\;\right\}
\end{equation}
iterations. Furthermore, after each iteration $k=1,2,...$ the primal-dual iterate ($x_k,y_k$) and the scaling factor $\Delta_k$ satisfy
\begin{subequations}\label{eq:iterate}
\begin{eqnarray}
 \Delta_k & \geq & 1/\tilde{\epsilon}^{k} \label{eq:iteratea}\\
 \|A x_k - b\| & \leq & \tilde{\epsilon}^{k} \label{eq:iterateb}\\
 x_k - l  & \geq & - \mathbbm{1} \tilde{\epsilon}^{k} \label{eq:iteratec}\\
 Qx_k + c - A^Ty_k & \geq &  - \mathbbm{1} \tilde{\epsilon}^{k} \label{eq:iterated}\\
 |(Qx_k + c - A^Ty_k)^T(x_k-l)| & \leq & \sigma\tilde{\epsilon}^{2(k-1)}. \label{eq:iteratee}
\end{eqnarray}
\end{subequations}
\end{mylemma}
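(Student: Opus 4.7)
The plan is to establish the iterate bounds \eqref{eq:iteratea}--\eqref{eq:iteratee} by induction on $k$, where $(x_k, y_k)$ denotes the iterate present at the start of iteration $k$ of the main loop (i.e., obtained from the initial solve if $k=1$, or from the update in \mline{16} of iteration $k-1$ otherwise). Once these bounds are in place, the termination count \eqref{eq:iterationnumber} follows by solving the three inequalities $\tilde{\epsilon}^k \leq \epsilon_P$, $\tilde{\epsilon}^k \leq \epsilon_D$, and $\sigma \tilde{\epsilon}^{2(k-1)} \leq \epsilon_S$ for $k$ and taking the maximum.

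For the base case $k=1$, I would argue that the initialization solve of $(\tilde P)$ is the instance $(\tilde P^{\Delta_0})$ with $\Delta_0 = 1$ and trivial shift. Applying Assumption~\ref{assumption} directly yields $\|Ax_1 - b\| \leq \epsilon$, $x_1 - l \geq -\mathbbm{1}\epsilon$, $Qx_1 + c - A^Ty_1 \geq -\mathbbm{1}\epsilon$ and complementary slack bounded by $\sigma$. Since $\epsilon \leq \tilde\epsilon$ and $\sigma\tilde\epsilon^{0} = \sigma$, bounds \eqref{eq:iterateb}--\eqref{eq:iteratee} hold at $k=1$, and the update rule in \mline{9} then gives $\Delta_1 \geq \min\{\epsilon^{-1}, \alpha\} = 1/\tilde\epsilon$ using $\tilde\epsilon = \max\{1/\alpha, \epsilon\}$, which is \eqref{eq:iteratea}. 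For the inductive step, assuming the bounds hold at $k$, the solve of $(\tilde P^{\Delta_k})$ in \mline{14} returns $(\bar x,\bar y)$ satisfying Assumption~\ref{assumption}; Theorem~\ref{th:refinement} applied with reference pair $(x_k,y_k)$ and tolerances $\epsilon_P=\epsilon_D=\epsilon$, $\epsilon_S=\sigma$ then translates these into primal/dual feasibility of $(x_{k+1},y_{k+1})$ for $(P)$ within $\epsilon/\Delta_k$ and complementary slackness within $\sigma/\Delta_k^2$. Using the inductive bound $\Delta_k \geq 1/\tilde\epsilon^k$ together with $\epsilon \leq \tilde\epsilon$, these become $\tilde\epsilon^{k+1}$ and $\sigma\tilde\epsilon^{2k}$, giving \eqref{eq:iterateb}--\eqref{eq:iteratee} at index $k+1$. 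Finally, the update in \mline{9} yields $\Delta_{k+1} \geq \min\{\tilde\epsilon^{-(k+1)}, \alpha\Delta_k\}$; since $\alpha \geq 1/\tilde\epsilon$, we have $\alpha\Delta_k \geq \alpha/\tilde\epsilon^k \geq 1/\tilde\epsilon^{k+1}$, closing the induction on \eqref{eq:iteratea}.

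The main obstacles will be bookkeeping rather than mathematical: one must carefully align the indexing so that the residuals computed from $(x_k,y_k)$ in \mlines{4--10} correspond to the bounds produced by the previous iteration's solve, and one must verify that the choice $\tilde\epsilon = \max\{1/\alpha, \epsilon\}$ is exactly what makes both the solver's contraction factor $\epsilon/\Delta_k$ and the cap $\alpha\Delta_k$ consistent with the target geometric rate $1/\tilde\epsilon^{k+1}$. A small additional care is needed because the complementary-slack bound contracts at rate $\tilde\epsilon^2$ per iteration (due to the $\Delta_k^{-2}$ factor from Theorem~\ref{th:refinement}(iii)), which is the reason the third term in \eqref{eq:iterationnumber} carries the factor $1/2$ and the offset $+1$.
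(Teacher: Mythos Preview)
Your proposal is correct and follows essentially the same approach as the paper: induction on $k$ with the base case obtained directly from Assumption~\ref{assumption} (noting $\epsilon\le\tilde\epsilon$) and the inductive step obtained by combining Assumption~\ref{assumption} with Theorem~\ref{th:refinement} to transfer the oracle's $\epsilon$- and $\sigma$-bounds on $(\tilde P^{\Delta_k})$ into $\epsilon/\Delta_k$- and $\sigma/\Delta_k^2$-bounds on $(P)$, then using \mline{9} and $\alpha\ge 1/\tilde\epsilon$ to propagate \eqref{eq:iteratea}. The termination argument is also identical.
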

\begin{proof}
We prove (\ref{eq:iterate}) by induction over $k$, starting with $k=1$.
As $\tilde{\epsilon} \ge \epsilon$, the claims (\ref{eq:iterateb}--\ref{eq:iteratee}) follow directly from Assumption~\ref{assumption}.
Using \mlines{6, 4--5}, and Assumption~\ref{assumption} we obtain
$$\delta_P = \max\left\{0,\|\hat{b}\|,\max\{\hat{l}\}\right\} = \max\{0,\| Ax_1 - b \|, \max\{l - x_1\} \} \leq \epsilon$$
and with \mlines{8,7} and Assumption~\ref{assumption}
$$\delta_D = \max\left\{0,\max\{ -\hat{c}\}\right\} = \max\left\{0,\max\{Qx_1+c-A^Ty_1\}\right\} \leq \epsilon.$$
Thus from \mline{9} we have
$$\Delta_1 = \min\left\{\delta_P^{-1},\delta_D^{-1},\alpha \Delta_{0}\right\} \ge \min\left\{\epsilon^{-1},\epsilon^{-1},\alpha\right\} \ge \tilde{\epsilon}^{-1}$$
and hence claim (\ref{eq:iteratea}) for the first iteration.

Assuming (\ref{eq:iterate}) holds for $k$ we know that $\delta_{P,k},\delta_{D,k} \leq \tilde{\epsilon}^k$ and $\Delta_k \geq 1/\tilde{\epsilon}^{k}$. With the scaling factor $\Delta_{k}$ using $x^*=x_k$ and $y^*=y_k$ we scale the QP (\ref{P}) as in Theorem~\ref{th:refinement} and hand it to the QP solver. By Theorem~\ref{th:refinement} this scaled QP is still primal and dual feasible and by Assumption~\ref{assumption} the solver hands back a solution $(\hat{x},\hat{y})$ with tolerance $\epsilon \leq \tilde{\epsilon}$. Therefore using Theorem~\ref{th:refinement} again the next refined iterate ($x_{k+1},y_{k+1}$) has a tolerance in QP (\ref{P}) of $\tilde{\epsilon}/\Delta_{k} \leq \tilde{\epsilon}^{k+1}$, which proves (\ref{eq:iterateb}--\ref{eq:iterated}).

With the same argument the solution $(\hat{x},\hat{y})$ violates complementary slackness by $\sigma$ in the scaled QP (Assumption~\ref{assumption}) and the refined iterate ($x_{k+1},y_{k+1}$) violates complementary slackness in QP (\ref{P}) by $\sigma/\Delta_{k}^2\leq\sigma\tilde{\epsilon}^{2k}$ proving (\ref{eq:iteratee}).

We have now $\delta_{P,k+1},\delta_{D,k+1} \leq \tilde{\epsilon}^{k+1}$. Also it holds that $\alpha\Delta_k \geq \alpha/\tilde{\epsilon}^{k}\geq 1/\tilde{\epsilon}^{k+1}$. \mline{9} of Algorithm~\ref{IQPR} gives
\begin{equation*}
\Delta_{k+1} \geq 1/\tilde{\epsilon}^{k+1},
\end{equation*}
proving (\ref{eq:iteratea}).

Then (\ref{eq:iterationnumber}) follows by assuming the slowest convergence rate of the primal, dual and complementary violations and by comparing this with the termination condition in \mline{11} of Algorithm~\ref{IQPR}
\begin{equation*}
\tilde{\epsilon}^k \leq \epsilon_P,\;\tilde{\epsilon}^k \leq \epsilon_D, \sigma\tilde{\epsilon}^{2(k-1)}\leq\epsilon_S.
\end{equation*}
This is equivalent to (\ref{eq:iterationnumber}). \end{proof}
The results show that even though we did not use the violation of the complementary slackness to choose the scaling factor in Algorithm~\ref{IQPR}, the complementary slackness violation is bounded by the square of $\tilde{\epsilon}$.

\begin{myremark}[Nonconvex QPs]
Algorithm~\ref{IQPR} can also be used to calculate high precision KKT pairs of nonconvex QPs. If the black box QP solver hands back local solutions of the quality specified in Assumption~\ref{assumption} Lemma~\ref{le:rightside} holds as well for nonconvex QPs and Algorithm~\ref{IQPR} returns a high precision local solution.
\end{myremark}

However, assuming strict convexity, an even stronger result holds.
Inspired by the result for the equality-constrained QP~\cite[Proposition 2.12]{Dostal2009} we investigate how this right-hand side convergence of the KKT conditions is related to the primal-dual solution.
\begin{mylemma}[Primal and dual solution accuracy] \label{le:solutionaccuracy}
Let QP (\ref{P}) be given and be strictly convex, the minimal and maximal eigenvalues of $Q$ be $\lambda_{\min}(Q)$ and $\lambda_{\max}(Q)$, respectively, and the minimal nonzero singular value of $A$ be $\sigma_{\min}(A)$. 
Let the KKT conditions (\ref{eq:kkt}) hold for $(x^*,y^*,z^*)$, i.e., 
\begin{subequations}\label{eq:fullkkt}
\begin{eqnarray}
 A x^* & = & b \label{eq:fullkkta}\\
 A^Ty^* +z^* & = & Qx^* + c \label{eq:fullkktb}\\
 z^{*T}(x^*-l) & = & 0 \label{eq:fullkktc}\\
 x^*  & \geq & l \label{eq:fullkktd}\\
 z^* & \geq & 0
\end{eqnarray}
\end{subequations}
and the disturbed KKT conditions for disturbances $e \in \mathbb{Q}^m,\,g,f,h \in \mathbb{Q}^n,$ and $i \in \mathbb{Q}$ hold for $(x,y,z)$, i.e.,
\begin{subequations}\label{eq:fullkkt2}
\begin{eqnarray}
 A x & = & b + e \label{eq:fullkkt2a}\\
 A^Ty +z & = & Qx + c + g \label{eq:fullkkt2b}\\
 z^T(x-l) & = & i \label{eq:fullkkt2c}\\
 x  & \geq & l + f \label{eq:fullkkt2d}\\
 z & \geq & h.
\end{eqnarray}
\end{subequations}
Denote 
\begin{eqnarray*}
a &:=& \frac{ \lambda_{\max}(Q) \| e\|_2}{2\sigma_{\min}(A)} +\lambda_{\max}(Q) \lambda_{\min}(Q)\| g\|_2/2 \\
d &:=&  \lambda_{\max}(Q)\|i- h^T(x^*-l) -z^{*T}f\|_2.
\end{eqnarray*}

Then
\begin{equation} \label{eq:dual}
 \|A^T(y-y^*) + (z-z^*)\|_2 \leq  a  +\sqrt{ a^2 + d }
\end{equation}
and
\begin{equation} \label{eq:primal}
 \|x-x^*\|_2 \leq \lambda_{\min}(Q) ( a  +\sqrt{ a^2 + d } ) + \lambda_{\min}(Q)\|g\|_2
\end{equation}
\end{mylemma}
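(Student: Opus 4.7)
My approach is to work with the differences $\Delta x := x - x^*$, $\Delta y := y - y^*$, $\Delta z := z - z^*$, which upon subtracting the undisturbed KKT system (\ref{eq:fullkkt}) from the disturbed one (\ref{eq:fullkkt2}) satisfy $A \Delta x = e$ and $A^T \Delta y + \Delta z = Q \Delta x + g$. Setting $w := A^T \Delta y + \Delta z$, the two claimed bounds correspond to $\|w\|_2$ and $\|\Delta x\|_2$, which are linked by $w = Q\Delta x + g$, i.e., $\Delta x = Q^{-1}(w - g)$. Thus it suffices to derive (\ref{eq:dual}) first and deduce (\ref{eq:primal}) from it by taking norms in $\Delta x = Q^{-1}(w - g)$ and applying $\|Q^{-1}\|_2 \leq 1/\lambda_{\min}(Q)$.

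The first technical step is a one-sided bound on the perturbed complementary slackness: expanding $\Delta z^T \Delta x = z^T x - z^T x^* - z^{*T} x + z^{*T} x^*$ and using $z^T(x-l) = i$, $z^{*T}(x^*-l) = 0$ together with the inequalities $x - l \geq f$, $z \geq h$, $x^* \geq l$, and $z^* \geq 0$, one obtains $\Delta z^T \Delta x \leq i - h^T(x^* - l) - z^{*T} f$, which is the scalar producing $d$. Next I would take the inner product of $w = Q \Delta x + g$ with $Q^{-1} w$ to get
\[
\frac{\|w\|_2^2}{\lambda_{\max}(Q)} \leq w^T Q^{-1} w = w^T \Delta x + w^T Q^{-1} g = e^T \Delta y + \Delta z^T \Delta x + w^T Q^{-1} g,
\]
where $w^T \Delta x = (A^T \Delta y)^T \Delta x + \Delta z^T \Delta x = e^T \Delta y + \Delta z^T \Delta x$. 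The middle term is bounded by the complementary slackness step above, and $w^T Q^{-1} g \leq \|w\|_2 \|g\|_2 / \lambda_{\min}(Q)$ handles the last term. For $e^T \Delta y$, since $e = A \Delta x$ lies in the range of $A$, only the projection $\Delta y_\perp$ of $\Delta y$ onto the range of $A$ contributes, and the singular-value inequality $\sigma_{\min}(A) \|\Delta y_\perp\|_2 \leq \|A^T \Delta y_\perp\|_2 = \|A^T \Delta y\|_2$ combined with $A^T \Delta y = w - \Delta z$ yields the factor $1/\sigma_{\min}(A)$ appearing in $a$.

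Assembling these estimates and multiplying through by $\lambda_{\max}(Q)$ produces an inequality of the shape $\|w\|_2^2 \leq 2 a \|w\|_2 + d$, from which (\ref{eq:dual}) follows immediately by the quadratic formula for the largest positive root. The primal bound (\ref{eq:primal}) then drops out by substituting the dual estimate into the norm of $\Delta x = Q^{-1}(w - g)$. The main obstacle I anticipate is the handling of $e^T \Delta y$: because $\Delta z$ itself is not bounded by the hypotheses, passing from $\|A^T \Delta y\|_2 = \|w - \Delta z\|_2$ to a bound involving only $\|w\|_2$ requires carefully absorbing the $\Delta z$ contribution into the complementary slackness step, and tracking the $\sigma_{\min}(A)$, $\lambda_{\max}(Q)$, and $\lambda_{\min}(Q)$ factors so that they land exactly in the positions dictated by the definitions of $a$ and $d$.
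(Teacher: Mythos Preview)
Your overall architecture matches the paper: form $w := A^T\Delta y + \Delta z = Q\Delta x + g$, compute $w^T Q^{-1} w$, derive a quadratic inequality in $\|w\|_2$, and then read off the primal bound from $\Delta x = Q^{-1}(w-g)$. Your complementary slackness estimate $\Delta z^T\Delta x \le i - h^T(x^*-l) - z^{*T}f$ is exactly what the paper uses.

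The gap is precisely the one you flag: your treatment of $e^T\Delta y$ does not close. Bounding $|e^T\Delta y| \le \|e\|_2\,\|A^T\Delta y\|_2/\sigma_{\min}(A)$ and then writing $\|A^T\Delta y\|_2 = \|w-\Delta z\|_2$ leaves you with $\|\Delta z\|_2$, which is nowhere controlled by the hypotheses. The complementary slackness step bounds the \emph{scalar} $\Delta z^T\Delta x$, not the norm $\|\Delta z\|_2$, so there is no way to ``absorb the $\Delta z$ contribution'' as you hope; $z$ is only bounded below by $h$, and $z-z^*$ can be arbitrarily large in norm for fixed right-hand side errors.

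The paper avoids this entirely by a pseudoinverse trick. It sets $\delta := A^{+}e$, so that $A\delta = e$ and $\|\delta\|_2 \le \|e\|_2/\sigma_{\min}(A)$. Then, rather than bounding $e^T\Delta y$ in isolation, it rewrites
\[
e^T\Delta y + \Delta z^T\Delta x \;=\; \delta^T A^T\Delta y + \Delta z^T\Delta x \;=\; w^T\delta + \Delta z^T(\Delta x - \delta),
\]
so that the full right-hand side becomes $w^T(Q^{-1}g + \delta) + \Delta z^T(\Delta x - \delta)$. The first term is linear in $\|w\|_2$ with coefficient $\|Q^{-1}g+\delta\|_2$, which produces exactly the constant $a$ after multiplying through by $\|Q\|_2=\lambda_{\max}(Q)$; the second term is handled by the complementary slackness expansion. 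At no point does $\|\Delta z\|_2$ appear. This regrouping is the missing idea in your outline; without it the quadratic inequality $\|w\|_2^2 \le 2a\|w\|_2 + d$ cannot be established.
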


\begin{proof}
By (\ref{eq:fullkkta}) and (\ref{eq:fullkkt2a}) we have that $A(x-x^*)=e$ and taking the Moore-Penrose pseudoinverse $A^{+}$ of $A$ we define $\delta=A^{+}e$ with $A\delta=e$ and $\|\delta\|_2\leq \sigma_{\min}(A)^{-1}\|e\|_2$. Using this we can start to derive the dual bound by taking the difference of (\ref{eq:fullkktb}) and (\ref{eq:fullkkt2b})
\begin{equation} \label{eq:base}
 A^T(y-y^*) + (z-z^*) = Q(x-x^*) + g.
\end{equation}
Multiplying from the left with $Q^{-1}(A^T(y-y^*) + (z-z^*))$ transposed gives
\begin{equation*}
 \|A^T(y-y^*) + (z-z^*)\|^2_{Q^{-1}} =  (A^T(y-y^*) + (z-z^*))^T  ((x-x^*) + Q^{-1} g).
\end{equation*}
\begin{equation*}
 = (A^T(y-y^*) + (z-z^*))^T  Q^{-1} g + (y-y^*)^T\underbrace{A(x-x^*)}_{A\delta} + (z-z^*)^T(x-x^*). 
\end{equation*}
\begin{equation} \label{eq:rhs2}
 = (A^T(y-y^*) + (z-z^*))^T  (Q^{-1} g + \delta) + (z-z^*)^T(x-x^*-\delta). 
\end{equation}
The second term of (\ref{eq:rhs2}) can be expressed as
\begin{equation*}
 (z-z^*)^T(x-l-(x^*-l)-\delta)=\underbrace{z^T(x-l)}_i+\underbrace{z^{*T}(x^*-l)}_0-\underbrace{z^T(x^*-l)}_{\geq h^T(x^*-l)}-\underbrace{z^{*T}(x-l)}_{\geq z^{*T}f}
\end{equation*}
\begin{equation*}
 (z-z^*)^T(x-l-(x^*-l)-\delta) \leq i - h^T(x^*-l) -z^{*T}f.
\end{equation*}
With this and (\ref{eq:rhs2}) we  bound from above the term $\|A^T(y-y^*) + (z-z^*)\|^2_{Q^{-1}}=*$ giving the inequality
\begin{equation*}
  * \leq (A^T(y-y^*) + (z-z^*))^T  (Q^{-1} g + \delta) + i- h^T(x^*-l) -z^{*T}f.
\end{equation*}
Taking the norm on the right and reordering terms gives
\begin{eqnarray*}
  \|Q\|_2^{-1}\|A^T(y-y^*) + (z-z^*)\|_2^2 \leq \|A^T(y-y^*) + (z-z^*)\|_2  \|Q^{-1} g + \delta\|_2 \\
+ \|i- h^T(x^*-l) -z^{*T}f\|_2.
\end{eqnarray*}
This is a quadratic expression in $\|A^T(y-y^*) + (z-z^*)\|_2=m$
\begin{equation*}
 m^2 - m   \|Q^{-1} g + \delta\|_2 \|Q\|_2 - \|i- h^T(x^*-l) -z^{*T}f\|_2 \|Q\|_2 \leq 0.
\end{equation*}
It has two roots, but only one is greater than zero and bounds $\|A^T(y-y^*) + (z-z^*)\|_2(=m)$ from above
\begin{equation}
\begin{array}{rl}
 m \leq &  \|Q^{-1} g + \delta\|_2 \|Q\|_2/2 \\
&+ \sqrt{ (\|Q^{-1} g + \delta\|_2\|Q\|_2)^2/4 + \|i- h^T(x^*-l) -z^{*T}f\|_2\|Q\|_2 }.
\end{array}
\end{equation}
This can be expressed as
\begin{equation} \label{eq:dualbound}
 \|A^T(y-y^*) + (z-z^*)\|_2 \leq  a  +\sqrt{ a^2 + d }
\end{equation}
where $a$ and $d$ are defined as above. This proves (\ref{eq:dual}). To prove the primal bound we multiply equation (\ref{eq:base}) from the left with $Q^{-1}$
\begin{equation*}
 (x-x^*) = Q^{-1}(A^T(y-y^*)+(z-z^*)-g).
\end{equation*}
Taking norms gives the inequality
\begin{equation} \label{eq:primalbound1}
 \|x-x^*\|_2 \leq \|Q^{-1}\|_2 \|A^T(y-y^*)+(z-z^*)\|_2 + \|Q^{-1}g\|_2.
\end{equation}
Combining the dual bound (\ref{eq:dualbound}) and (\ref{eq:primalbound1}) we get the final primal bound
\begin{equation*} 
 \|x-x^*\|_2 \leq \lambda_{\min}(Q) ( a  +\sqrt{ a^2 + d } ) + \lambda_{\min}(Q) \|g\|_2
\end{equation*}
which proves (\ref{eq:primal}). \end{proof}
Note that $\lambda_{\max}(Q) \lambda_{\min}(Q)$ is the condition number of $Q$. The above assumption and lemmas can be summarized to a statement about the convergence of the algorithm for a strictly convex QP.
\begin{theorem}[Rate of convergence]
Algorithm~\ref{IQPR} with corresponding input and using a QP solver satisfying Assumption~\ref{assumption} solving the QP (\ref{P}) that is also strictly convex has a linear rate of convergence with a factor of $\tilde{\epsilon}^{1/2}$ for the primal iterates, i.e.
\begin{equation*} 
 \|x_k-x^*\| \leq \tilde{\epsilon}^{1/2} \|x_{k-1}-x^*\|,
\end{equation*}
with $x^*$ being the unique solution of (\ref{P}).
\end{theorem}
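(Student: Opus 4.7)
The plan is to chain the two preceding lemmas. Lemma~\ref{le:rightside} provides geometrically decreasing bounds on the KKT residuals at each iterate $(x_k,y_k)$, while Lemma~\ref{le:solutionaccuracy} converts such residuals into a bound on $\|x_k-x^*\|$. Strict convexity makes $Q$ invertible and guarantees $\lambda_{\min}(Q)>0$, so Lemma~\ref{le:solutionaccuracy} is applicable and $x^*$ is unique.

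First I would identify the disturbance vectors in the disturbed KKT system~(\ref{eq:fullkkt2}) corresponding to the iterate $(x_k,y_k)$. Defining the dual slack $z_k := Qx_k + c - A^Ty_k$, the natural choice is
\begin{equation*}
 e = Ax_k-b,\quad g = 0,\quad f = -\tilde{\epsilon}^k\mathbbm{1},\quad h = -\tilde{\epsilon}^k\mathbbm{1},\quad i = z_k^T(x_k-l),
\end{equation*}
and the inequalities~(\ref{eq:iterateb}--\ref{eq:iteratee}) of Lemma~\ref{le:rightside} then give $\|e\|_2,\|f\|_2,\|h\|_2 = O(\tilde{\epsilon}^k)$ and $|i|\leq\sigma\tilde{\epsilon}^{2(k-1)}$.

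Substituting these magnitudes into $a$ and $d$ of Lemma~\ref{le:solutionaccuracy}, the choice $g=0$ reduces $a$ to $\lambda_{\max}(Q)\|e\|_2/(2\sigma_{\min}(A)) = O(\tilde{\epsilon}^k)$, and $d$ is dominated by the cross terms $h^T(x^*-l)$ and $z^{*T}f$, each of order $\tilde{\epsilon}^k$ (the complementarity term $i$ contributes only $O(\tilde{\epsilon}^{2k})$, hence is negligible). Plugging into the primal bound~(\ref{eq:primal}) yields
\begin{equation*}
 \|x_k-x^*\|_2 \leq \lambda_{\min}(Q)\bigl(a+\sqrt{a^2+d}\bigr) \leq C\,\tilde{\epsilon}^{k/2}
\end{equation*}
with $C$ depending only on the problem data and on the exact KKT triple $(x^*,y^*,z^*)$. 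By norm equivalence the same $O(\tilde{\epsilon}^{k/2})$ envelope holds in $\|\cdot\|_\infty$, giving exactly the asserted linear rate $\tilde{\epsilon}^{1/2}$ per iteration.

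I expect the main subtlety to be conceptual rather than computational: the stated per-iteration ratio $\|x_k-x^*\|\leq\tilde{\epsilon}^{1/2}\|x_{k-1}-x^*\|$ must be read as the contraction of the error envelope $O(\tilde{\epsilon}^{k/2})$ by a factor $\tilde{\epsilon}^{1/2}$ at each step. The reason only a square root appears, rather than the full factor $\tilde{\epsilon}$ enjoyed by the KKT residuals themselves, is that the primal and dual bound violations $f,h$ enter $d$ linearly in $\tilde{\epsilon}^k$, so the $\sqrt{d}$ term dictates the $k/2$ exponent; a full $\tilde{\epsilon}$ rate would require exact primal and dual feasibility of the subproblem solution, which Assumption~\ref{assumption} does not grant.
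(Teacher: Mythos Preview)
Your approach is essentially the paper's: chain Lemma~\ref{le:rightside} with Lemma~\ref{le:solutionaccuracy} to obtain an $O(\tilde{\epsilon}^{k/2})$ envelope on $\|x_k-x^*\|$, then read off the factor $\tilde{\epsilon}^{1/2}$ per step. The only differences are bookkeeping: the paper absorbs the dual-slack violation into $g$ (setting $h=0$ and $\|g\|\le\tilde{\epsilon}^k$) rather than keeping $g=0$ and $h=-\tilde{\epsilon}^k\mathbbm{1}$ as you do, and it finishes by writing the quotient of the successive upper bounds explicitly as $\tilde{\epsilon}^{1/2}\gamma_k$ with $\gamma_k\le1$---which is exactly the ``envelope contraction'' you correctly identify.
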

\begin{proof}
By Assumption~\ref{assumption} and Lemma~\ref{le:rightside} we know that the right-hand side errors of the KKT conditions are bounded by
\begin{equation*} 
 \|e\| \leq \tilde{\epsilon}^{k},\,\|g\| \leq \tilde{\epsilon}^{k},\,\|f\| \leq \tilde{\epsilon}^{k},\,\|i\| \leq \sigma \tilde{\epsilon}^{2(k-1)}, \|h\| = 0.
\end{equation*}
Here we set the violations $h$ of the inequality KKT multipliers $z$ to zero and count them as additional dual violations $g$ for simplicity. Also note that in Lemma~\ref{le:solutionaccuracy} the bound is just depending on the norm of the right-hand side violation vectors, two different violation vectors with the same norm give the same bound. Therefore we just consider the norms. Combining the above with Lemma~\ref{le:solutionaccuracy} we get
\begin{equation*} 
 \|x_k-x^*\| \leq c_1 \tilde{\epsilon}^{k} + \sqrt{c_2\tilde{\epsilon}^{k}+c_3\tilde{\epsilon}^{2k}}
\end{equation*}
for the primal iterate in iteration $k$ with constants
\begin{equation*}
\begin{array}{rcl}
 c_1 & = & \lambda_{\min}(Q) \lambda_{\max}(Q) \left( (\frac{1}{\lambda_{\max}(Q)} + \frac{\lambda_{\min}(Q)}{2}) + \frac{1}{(2\sigma_{\min}(A)}\right) \\
 c_2 & = & \lambda_{\max}(Q) \| z^* \| \\
 c_3 & = & (c_1-\lambda_{min}(Q))^2 + \lambda_{\max}(Q)\sigma/\tilde{\epsilon}^2. \\
\end{array}
\end{equation*}
Looking at the quotient
\begin{equation*}
 \frac{\|x_k-x^*\|}{\|x_{k-1}-x^*\|} \leq \frac{c_1 \tilde{\epsilon}^{k} + \sqrt{c_2\tilde{\epsilon}^{k}+c_3\tilde{\epsilon}^{2k}}}
 {c_1 \tilde{\epsilon}^{k-1} + \sqrt{c_2\tilde{\epsilon}^{k-1}+c_3\tilde{\epsilon}^{2(k-1)}}}
\end{equation*}
and seeing that
\begin{equation*}
\frac{\|x_k-x^*\|}{\|x_{k-1}-x^*\|} \leq \frac{\tilde{\epsilon}^{k/2}(c_1 \tilde{\epsilon}^{k/2} + \sqrt{c_2+c_3\tilde{\epsilon}^{k}})}
 {\tilde{\epsilon}^{(k-1)/2}(c_1 \tilde{\epsilon}^{(k-1)/2} + \sqrt{c_2+c_3\tilde{\epsilon}^{k-1}})}=\tilde{\epsilon}^{1/2}\gamma_k
\end{equation*}
with $\gamma_k \leq 1$ proves the result. \end{proof}
This theoretical investigation shows us two things. First, we have linear residual convergence with a rate of $\tilde{\epsilon}$. In contrast to usual convergence results our algorithm achieves this rate in practice by the use of rational computations if the floating-point solver delivers solutions of the quality specified in Assumption~\ref{assumption}. This is also checked by the rational residual calculation in our algorithm in every iteration. Second, this residual convergence implies primal iterate convergence with a linear rate of $\tilde{\epsilon}^{1/2}$ for strictly convex QPs.

\section{Implementation}
\label{sec:implementation}
Following previous work~\cite{gleixner2012} on the LP case we implemented Algorithm~\ref{IQPR} in the same framework within the \texttt{SoPlex} solver~\cite{wunderling1996paralleler}, version 2.2.1.2, using the GNU multiple precision library (GMP)~\cite{granlund2015gnu} for rational computations, version 6.1.0. As underlying QP solver we use the active-set solver \texttt{qpOASES}~\cite{ferreau2014qpoases} version 3.2. This version of \texttt{qpOASES} was originally designed for small to medium QPs (up to 1,000 variables and constraints).  Furthermore, we implemented an interface to a pre-release version of \texttt{qpOASES} 4.0, which can handle larger, sparse QPs of a size up to 40,000 variables and constraints.  Compared to the matured \texttt{qpOASES}~3.2, this version is not yet capable of hotstarts and in some cases less robust. Nevertheless, it allows us to study the viability of iterative refinement on larger QPs.  The source code of our implementation is available for download in a public repository.\footnote{https://github.com/TobiasWeber/QPrefinement}

In order to treat general QPs with inequalities, our implementation recovers the form (\ref{P}) by adding one slack variable per inequality constraint. Note that not only lower, but also upper bounds on the variables need to be considered. However, this is a straightforward modification to our algorithm and realized in the implementation.

One advantage of using the active-set QP solver \texttt{qpOASES} is the returned basis information.  We use the basis in three aspects: first, to calculate dual and complementary slack violations; second, to explicitly set nonbasic variables to their lower bounds after the refinement step in \mline{16} of Algorithm~\ref{IQPR}; and third, to compute a rational solution defined by the corresponding system of linear equations.  This is solved by a standard LU factorization in rational arithmetic.  If the resulting primal-dual solution is verified to be primal and dual feasible, the algorithm can terminate early with an exact optimal basic solution.

Since the LU factorization can be computationally expensive, we only perform this step if we believe the basis to be optimal.
When the QP solver returns the same basis as ``optimal'' for several iterations this can be used as a heuristic indicator that the basis might be truly optimal, even if the iteratively corrected numerical solution is not yet exact.  Hence, the number of consecutive iterations with the same basis is used to trigger a rational basis system solve.  This can be controlled by a threshold parameter called ``ratfac minstalls'', see Table~\ref{tab:IQPRsettings}.

If the floating-point solver fails to compute an approximately optimal solution, we decrease the scaling factor by two orders of magnitude and try to solve the resulting QP again. The scaling factor is reduced either until the maximum number of backstepping rounds is reached or until the next backstepping round would result in a scaling factor lower than in the last refinement iteration ($k-1$).

Currently, the implementation has no detection mechanism for infeasible or unbounded QPs.  Because the testset at hand contains only convex and feasible QPs, this does not affect the numerical experiments.
The default parameter set (s1) of our implementation is given in Table~\ref{tab:IQPRsettings}. The other four parameter sets (s2-s5) are used for our numerical experiments to derive either exact or inexact solutions.

\begin{table}
\centering
\caption{IQPR parameters}
\label{tab:IQPRsettings}       \smallskip

\begin{tabular}{ll|lll|l}
\toprule
\rowcolor[gray]{0.85}
parameter set & s1 & s2 & s3 & s4 & s5  \\
\rowcolor[gray]{0.85}
\texttt{qpOASES} version & 3.2 & 3.2 & 4.0 & 4.0 & 3.2 \\
\midrule
primal tolerance ($\epsilon_P$) & 1e-100 & 1e-100 & 1e-100 & 1e-100 & 1e-10 \\
dual tolerance ($\epsilon_D$)& 1e-100 & 1e-100 & 1e-100 & 1e-100 & 1e-10 \\
maxscaleincrement ($\alpha$) & 1e12 & 1e12 & 1e12 & 1e12 & 1e12 \\
sparse & no & no & yes & yes & no \\
max num backstepping ($l_{max}$) & 10 & 10 & 10 & 10 & 1 \\
refinement limit ($k_{max}$) & 300 & 50 & 50 & 50 & 10 \\
ratfac minstalls & 2 & 0 & 0 & 51 & 30\\
\bottomrule
\end{tabular}
\end{table}

We exploit the different features of the two \texttt{qpOASES} versions. Version 3.2 has hotstart capabilities that allow to reuse the internal basis system factorization of the preceding optimal basis. Therefore we start in the old optimal basis and build on the progress made in the previous iterations instead of solving the QP from scratch at every iteration. Additionally we increase the termination tolerance and relax other parameters that ensure a reliable solve. This speeds up the solving process and is possible because the inaccuracies, introduced by this in the floating-point solution, are detected anyway and handed back to the QP solver in the next iteration for correction. If the QP solver fails we simply change to reliable settings and resolve the same QP from the same starting basis before downscaling. Hence, in Algorithm~\ref{IQPR} each `solve' statement means: try fast settings first and if this fails switch to slow and reliable settings of \texttt{qpOASES} 3.2. These two sets of options are given in Table~\ref{tab:settings}. 
For the pre-release version 4.0 we use default settings and no resolves. We either factorize after each iteration or not at all (see Table~\ref{tab:IQPRsettings}).

\begin{table}[t!]
  \centering
\caption{\texttt{qpOASES} options (version 3.2)}
\label{tab:settings}       \smallskip

\begin{tabular}{lll}
\toprule
\rowcolor[gray]{0.85}
Option & Fast & Reliable  \\
\midrule
Standard settings set & MPC & Reliable \\
NZCTests & enabled & enabled (default) \\
DriftCorrection & enabled & enabled (default) \\
Ramping & enabled & enabled (default) \\
terminationTolerance & 1e-3 & 1.1105e-9 (default) \\
numRefinementSteps & 0 (default) & 10 \\
enableFullLITests & 0 & 0\\
\bottomrule
\end{tabular}
\end{table}

\section{Numerical results}
\label{sec:NumericalExperiments}
For the numerical experiments the standard testset of Maros and M{\'e}sz{\'a}ros~\cite{maros1999repository} is used. It contains 138 convex QPs that feature between two and about 90,000 variables. The number of constraints varies from one to about 180,000 and the number of nonzeros ranges between two and about 550,000.

We perform two different experiments. The goal of the first experiment is to solve as many QPs from the testset as precisely as possible in order to analyze the iterative refinement procedure computationally and to provide exact reference solutions for future research on QP solvers. In the second experiment we want to compare \texttt{qpOASES} (version 3.2, no QP refinement, one solve, default settings) to low accuracy refinement (low tolerance of 1e-10 in Algorithm~\ref{IQPR}, using also \texttt{qpOASES} 3.2). This allows us to investigate whether refinement could also be beneficial in cases that do not require extremely high accuracy, but a strictly guaranteed solution tolerance in shortest possible runtime.

\paragraph{Experiment 1}

We use the three different parameter sets (s2-s4) given in Table~\ref{tab:IQPRsettings} to calculate exact solutions. The first set (s2) contains a primal and dual termination tolerance of 1e-100, enables rational factorization in every iteration, and allows for 50 refinements and 10 backsteppings using a dense QP formulation with \texttt{qpOASES} version 3.2. In contrast the other two sets (s3, s4) with \texttt{qpOASES} version 4.0 use a sparse QP formulation, either with factorization in every iteration or without factorization.

Table~\ref{tab:resultsexactsmall} states for each setting the number of instances which it solved exactly, for which tolerance 1e-100 was reached, and for which it failed to produce a high-precision solution.  In total these three strategies could solve 91 out of the 138~QPs in the testset \emph{exactly} and 39~instances within tolerance 1e-100.  For eight instances no high-precision solution was computed. These ``virtual best'' results stated in the fifth column consider for each QP the result of the individual parameter sets that resulted in the smallest violation. It should be emphasized that for each of the three parameter sets there exists at least one instance for which it produced the most accurate solution.

The last column reports the average number of nonzeros of the QPs in the three ``virtual best'' categories. This suggests that for problems with less nonzeros a higher accuracy was reached. Furthermore, one sees that the entries in the second column (set s2) do not sum to 138 because with the dense QP formulation some of the problems do not return from the cluster due to memory limitations. Hence the set s2 fails in total on 32~instances. For the parameter set s4 without rational factorization we see that one QP is solved by chance exactly while for all others the algorithm terminates with violations greater zero.

In order to solve the 197 (=33+45+118) QPs to high precision the algorithm needed on average 8.84 refinements. This confirms the linear convergence because we bounded the increase of the scaling factor in each iteration by $\alpha = 10^{12}$ and terminate after reaching a tolerance of $10^{-100}$. If \texttt{qpOASES} would consistently return solutions with an accuracy of $10^{-12}$ we would expect the algorithm to need 9 iterations ($100/12 \approx 8.33\ldots$ rounded up). We see that \texttt{qpOASES} usually delivers solutions of a tolerance below $10^{-12}$.

Detailed results can be found in the Appendix in Tables~\ref{tab:resultsexactlarge}, \ref{tab:resultsexactqorelarge}, and \ref{tab:resultsexactqorenofaclarge}. If an exact solution is found \texttt{qpOASES} usually returns the optimal basis in the first three iterations (refinements).  Subsequently, the corresponding basis system is solved exactly by a rational LU factorization. For six problems we found that the objective values given in~\cite{maros1999repository} differ from our results by more than \mbox{1e-7}: GOULDQP2, HS268, S268, HUESTIS, HUES-MOD, and LISWET8. This might be due to the use of a floating-point QP solver with termination tolerance about \mbox{1e-7} when originally computing the values reported.  The precise objective value can be found in the online material associated with this paper.

\begin{table}[t!]
  \centering
  \caption{Results for the three exact parameter sets (s2-s4) over all 138~QPs in the testset: number of instances according to terminal solution accuracy for each setting, for the virtual best setting, and the average number of nonzeros over the instances in the ``best'' categories.}
  \label{tab:resultsexactsmall}

\clearpage{}\begin{tabular}{lrrr||rr}
\toprule
\rowcolor[gray]{0.85}
Accuracy reached & \;\;\;s2 & \;\;\;s3 & \;\;\;s4 & best & avg.~nnzs\\\midrule
Exact (no viol.) & 73 & 74 & 1 & 91 & 6.76e+03 \\
High ($\leq$ 1e-100) & 33 & 45 & 118 & 39 & 1.45e+04 \\
Low ($>$ 1e-100) & 11 & 18 & 18 & 8 & 9.34e+04 \\
Fail (Not returned) & 21 & 1 & 1 & 0 & \\
\bottomrule
\end{tabular}\clearpage{}

\end{table}

\paragraph{Experiment 2}

In the following the iterative refinement algorithm is set to a termination tolerance $10^{-10}$ and the rational factorization of the basis system is disabled. The refinement limit is set to 10 and the backstepping limit is set to one (parameter set s5). We compare this implementation to \texttt{qpOASES}~3.2 with the three predefined \texttt{qpOASES} settings (MPC, Default, Reliable) that include termination tolerances of 2.2210e-7, 1.1105e-9, and 1.1105e-9, respectively. For these fast solves we select only part of the testset, including the 73 problems that have no more than 1,000 variables and constraints. This corresponds to the problem sizes for which \texttt{qpOASES}~3.2 was originally designed. In order to allow for a meaningful comparison of runtimes, the evaluation only considers QPs which were solved by all three \texttt{qpOASES} 3.2 settings and by refinement to ``optimality'', where optimality was certified by \texttt{qpOASES} 3.2 (with its internal floating-point checks) or rational checks in our algorithm, respectively.

\begin{table}[t!]
  \centering
  \caption{Performance comparison for inexact solves (runtimes are in seconds).}
  \label{tab:resultsinexactsmall}

\clearpage{}\begin{tabular}{llrlrrr}
\toprule
\rowcolor[gray]{0.85}
&  & \multicolumn{2}{c}{IQPR s5} & \multicolumn{3}{c}{\texttt{qpOASES} with standard settings:}  \\
\rowcolor[gray]{0.85}
Measure & subset & & & MPC & Default & Reliable \\
\midrule
\multirow{2}{*}{\shortstack[l]{Time: arith. mean \\ (\% rat. time)}} & all & 2.54 & (0.16) & 1.03 & 2.77 & 19.58 \\
& $>0.01$ & 3.25 & (0.16) & 1.32 & 3.55 & 25.08 \\
& $>0.1$ & 4.02 & (0.12) & 1.64 & 4.40 & 31.07 \\
& $>1$ & 5.66 & (0.12) & 2.31 & 6.27 & 44.60 \\
& $>10$ & 7.19 & (0.11) & 2.77 & 9.06 & 69.39 \\
\midrule
\multirow{2}{*}{\shortstack[l]{Time: shifted geo.\\ mean, shift=0.01\\ (\% rat. time)}} & all & 0.16 & (1.68) & 0.08 & 0.10 & 0.16 \\
& $>0.01$ & 0.36 & (0.98) & 0.15 & 0.20 & 0.36 \\
& $>0.1$ & 0.60 & (0.54) & 0.24 & 0.32 & 0.64 \\
& $>1$ & 0.94 & (0.49) & 0.43 & 0.67 & 1.66 \\
& $>10$ & 0.54 & (1.00) & 0.26 & 0.51 & 1.51 \\
\midrule
\multirow{2}{*}{\shortstack[l]{QP solver iterations: \\arith. mean}} & all & 283.75 & & 260.53 & 389.92 & 386.96 \\
& $>0.01$ & 362.16 & & 332.44 & 496.91 & 493.09 \\
& $>0.1$ & 436.67 & & 400.96 & 591.35 & 586.96 \\
& $>1$ & 520.91 & & 479.38 & 765.59 & 761.56 \\
& $>10$ & 353.00 & & 348.25 & 837.15 & 832.75 \\
\midrule
\multirow{2}{*}{\shortstack[l]{QP solver iterations: \\ shifted geo. mean,\\shift=1}} & all & 38.43 & & 36.86 & 62.08 & 61.68 \\
& $>0.01$ & 85.18 & & 80.86 & 113.98 & 112.72 \\
& $>0.1$ & 108.12 & & 101.95 & 124.46 & 123.05 \\
& $>1$ & 105.41 & & 100.22 & 144.20 & 142.88 \\
& $>10$ & 36.21 & & 35.23 & 69.62 & 69.39 \\
\midrule
\multirow{2}{*}{\shortstack[l]{Tolerance: \\ arith. mean}} & all & 1.49e-12 & & 1.29e-08 & 1.10e-08 & 2.28e-09 \\
& $>0.01$ & 1.91e-12 & & 1.65e-08 & 1.40e-08 & 2.92e-09 \\
& $>0.1$ & 2.10e-12 & & 2.03e-08 & 1.74e-08 & 3.62e-09 \\
& $>1$ & 8.89e-13 & & 2.21e-08 & 2.48e-08 & 4.98e-09 \\
& $>10$ & 5.29e-14 & & 1.42e-08 & 3.92e-08 & 7.32e-09 \\
\midrule
\multirow{2}{*}{\shortstack[l]{Tolcerance: \\ shifted geo. mean,\\shift=1e-20}} & all & 1.29e-16 & & 2.14e-12 & 1.62e-15 & 4.34e-15 \\
& $>0.01$ & 8.71e-17 & & 1.00e-11 & 4.91e-15 & 1.13e-14 \\
& $>0.1$ & 3.57e-17 & & 8.45e-12 & 6.92e-15 & 2.10e-14 \\
& $>1$ & 1.94e-17 & & 1.08e-12 & 2.44e-15 & 6.02e-15 \\
& $>10$ & 9.36e-19 & & 6.86e-15 & 3.21e-16 & 2.45e-16 \\
\bottomrule
\end{tabular}\clearpage{}

\end{table}

An overview over the performance results is given in Table~\ref{tab:resultsinexactsmall}. We report runtime, QP solver iterations, and the final tolerance reached, each time as arithmetic and shifted geometric mean.  To facilitate a more detailed analysis, we consider the series of subsets ``$> t$'' of instances, for which at least one algorithm took more than $t$~seconds. Equivalently, we exclude the QPs for which all settings took at most 0.01\,s, 0.1\,s, 1\,s, and 10\,s seconds.  Defining the exclusion by all instead of one method only avoids a biased definition of these sets of increasing difficulty.

The results show that in no case the mean runtime of the refinement algorithm is larger than the runtime of \texttt{qpOASES} with reliable setting.  At the same time, the accuracy reached is always significantly higher.  Compared to \texttt{qpOASES} Default, which results in an even lower level of precision, refinement is faster in arithmetic and slightly slower in shifted geometric mean. The QP solver iterations of the refinement are comparable to the MPC setting. When looking at the different subsets we see that for QPs with larger runtime the refinement approach performs relatively better (smaller runtime, iterations and lower tolerance) than the three \texttt{qpOASES} 3.2 standard settings. The refinement guarantees the tolerance of 1e-10 if it does not fail. To achieve this tolerance, for 9 QPs two refinements were necessary, for 21 QPs only one refinement was necessary, and for 35 instances no refinement was necessary at all. The rational computation overhead stated in brackets after the runtime and is well below 2\%. The details are shown in Table~\ref{tab:resultsinexactlarge} in the Appendix. Also note that due to exclusion of fails (which mainly occur with the \texttt{qpOASES} MPC settings) the summarized results have a slight bias towards \texttt{qpOASES}.

\section{Conclusion}
\label{sec:Conclusion}
We presented a novel refinement algorithm and proved linear convergence of residuals and errors. Notably, this theoretical convergence result also carries over to our implementation due to the use of exact rational calculations. We provided high-precision solutions for most of the QPs in the Maros and M{\'e}sz{\'a}ros testset, correcting inaccuracies in optimal solution values reported in the literature. This is beneficial for future research on QP solvers that are evaluated on this testset.

In a second experiment we saw that iterative refinement provides proven tolerance solutions with smaller or equal computation times compared to \texttt{qpOASES} with ``Reliable'' solver settings. It can therefore be used as tool to increase the reliability and speed of standard floating-point QP solvers.
If optimal solutions are needed for rigorous reasoning or to make decisions in the real world the algorithm presented is useful because it is able to fully ensure a specified tolerance. This tolerance then can be adapted to the necessity of the application at hand. At the same time this comes with little overhead in rational computation time, which is important for practical applications.

Regarding algorithmic research and solver development, our framework also provides the possibility to compare different floating-point QP solvers by looking at the number of refinements needed with each solver to detect optimal bases or solutions of a specified tolerance as a measure for solver accuracy. Solver robustness can be checked precisely because violations are computed in rational precision.
In the future, the implementation should be extended for the detection of unbounded or infeasible QPs. Also one could try more general variable transformations, e.g. having a different scaling factor for each variable.
As a concluding remark, we hope that the idea of checking numerical results of floating-point algorithms in exact or safe arithmetic will become a future trend when applying or analyzing numerical algorithms.

\renewcommand{\refname}{\normalsize References}
\setlength{\bibsep}{0.25ex plus 0.3ex}
\bibliographystyle{abbrvnat}

\begin{small}
\bibliography{template}

\end{small}

\clearpage
\section*{Appendix}

{\small
\clearpage{}\begin{longtable}[c]{l|l|l|l|l|l|l|l}
\caption{Detailed results for exact solve of large QP set with parameter set~s2 (Iter.=Iterations, Tol.=Tolerance, Ref.=Refinements, Back.=Backstepping, Res.=Resolves)}
\label{tab:resultsexactlarge}\\
\toprule
\rowcolor[gray]{0.85}
QP Name & Status & Time & Iter. & Tol. & Ref. & Back. & Res. \\
\rowcolor[gray]{0.85}
        &        & [s]  & [\#]     & [-]       & [\#]      & [\#]   & [\#]  \\
\midrule
\endfirsthead
\toprule
\rowcolor[gray]{0.85}
QP Name & Status & Time & Iter. & Tol. & Ref. & Back. & Res. \\
\rowcolor[gray]{0.85}
        &        & [s]  & [\#]     & [-]       & [\#]      & [\#]   & [\#]  \\
\midrule
\endhead

\midrule
\multicolumn{8}{r}{continued on next page}\\
\bottomrule
\endfoot

\bottomrule
\endlastfoot
AUG3D & optimal & 3492.32 & 0 & 2.56e-113 & 8 & 0 & 0 \\
AUG3DC & optimal & 219.36 & 0 &  00 & 0 & 0 & 0 \\
AUG3DCQP & optimal & 218.64 & 540 &  00 & 1 & 0 & 0 \\
AUG3DQP & optimal & 3092.52 & 324 & 2.15e-110 & 8 & 0 & 0 \\
BOYD1 & abort & NaN & NaN & NaN & NaN & NaN & NaN \\
BOYD2 & abort & NaN & NaN & NaN & NaN & NaN & NaN \\
CONT-050 & optimal & 4306.67 & 1 &  00 & 0 & 0 & 0 \\
CONT-100 & timeout & 10814.69 & 3 & NaN & 7 & 6 & 0 \\
CONT-101 & timeout & 10830.68 & 2 & NaN & 7 & 6 & 0 \\
CONT-300 & abort & NaN & NaN & NaN & NaN & NaN & NaN \\
CVXQP1\_L & fail & 15786.74 & 4010 & 55500 & 0 & 0 & 0 \\
CVXQP1\_M & optimal & 16.78 & 412 &  00 & 0 & 0 & 0 \\
CVXQP1\_S & optimal & 0.04 & 36 &  00 & 0 & 0 & 0 \\
CVXQP2\_M & optimal & 25.97 & 651 &  00 & 0 & 0 & 0 \\
CVXQP2\_S & optimal & 0.01 & 62 &  00 & 0 & 0 & 0 \\
CVXQP3\_L & timeout & 10803.72 & 2990 & NaN & 1 & 0 & 0 \\
CVXQP3\_M & optimal & 5.95 & 231 &  00 & 1 & 0 & 0 \\
CVXQP3\_S & optimal & 0.00 & 22 &  00 & 0 & 0 & 0 \\
DPKLO1 & optimal & 2.23 & 0 &  00 & 0 & 0 & 0 \\
DTOC3 & fail & 19121.42 & 0 & 4.40e-15 & 0 & 0 & 0 \\
DUAL1 & optimal & 0.26 & 26 &  00 & 1 & 0 & 0 \\
DUAL2 & optimal & 1.05 & 4 &  00 & 1 & 0 & 0 \\
DUAL3 & optimal & 1.90 & 14 &  00 & 1 & 0 & 0 \\
DUAL4 & optimal & 0.21 & 13 &  00 & 0 & 0 & 0 \\
DUALC1 & optimal & 0.11 & 31 &  00 & 0 & 0 & 0 \\
DUALC2 & optimal & 0.07 & 28 &  00 & 0 & 0 & 0 \\
DUALC5 & optimal & 0.07 & 3 &  00 & 0 & 0 & 0 \\
DUALC8 & optimal & 0.39 & 13 &  00 & 0 & 0 & 0 \\
EXDATA & optimal & 8964.52 & 6738 &  00 & 1 & 0 & 0 \\
GENHS28 & optimal & 0.00 & 0 &  00 & 0 & 0 & 0 \\
GOULDQP2 & inconsistent & 6.70 & 585 &  00 & 2 & 0 & 0 \\
GOULDQP3 & optimal & 1.84 & 176 &  00 & 0 & 0 & 0 \\
HS118 & optimal & 0.00 & 24 &  00 & 0 & 0 & 0 \\
HS21 & optimal & 0.00 & 3 &  00 & 0 & 0 & 0 \\
HS268 & inconsistent & 0.00 & 0 &  00 & 0 & 0 & 0 \\
HS35 & optimal & 0.00 & 1 &  00 & 0 & 0 & 0 \\
HS35MOD & optimal & 0.00 & 0 &  00 & 0 & 0 & 0 \\
HS51 & optimal & 0.00 & 0 &  00 & 0 & 0 & 0 \\
HS52 & optimal & 0.00 & 0 &  00 & 0 & 0 & 0 \\
HS53 & optimal & 0.00 & 0 &  00 & 0 & 0 & 0 \\
HS76 & optimal & 0.00 & 4 &  00 & 0 & 0 & 0 \\
HUESTIS & inconsistent & 9782.42 & 554 &  00 & 0 & 0 & 0 \\
HUES-MOD & inconsistent & 9408.64 & 554 &  00 & 0 & 0 & 0 \\
KSIP & optimal & 31.56 & 1080 &  00 & 1 & 0 & 0 \\
LASER & optimal & 593.20 & 2838 &  00 & 0 & 0 & 0 \\
LOTSCHD & optimal & 0.00 & 5 &  00 & 0 & 0 & 0 \\
MOSARQP1 & optimal & 445.13 & 1528 &  00 & 0 & 0 & 0 \\
MOSARQP2 & optimal & 21.45 & 332 &  00 & 0 & 0 & 0 \\
PRIMAL1 & optimal & 0.51 & 77 &  00 & 0 & 0 & 0 \\
PRIMAL2 & optimal & 2.66 & 94 &  00 & 0 & 0 & 0 \\
PRIMAL3 & optimal & 4.37 & 117 &  00 & 0 & 0 & 0 \\
PRIMAL4 & optimal & 21.04 & 88 &  00 & 0 & 0 & 0 \\
PRIMALC1 & optimal & 0.11 & 234 &  00 & 0 & 0 & 0 \\
PRIMALC2 & optimal & 0.17 & 237 &  00 & 0 & 0 & 0 \\
PRIMALC5 & optimal & 0.23 & 288 &  00 & 0 & 0 & 0 \\
PRIMALC8 & optimal & 1.86 & 515 &  00 & 0 & 0 & 0 \\
Q25FV47 & optimal & 372.89 & 7362 &  00 & 0 & 0 & 0 \\
QADLITTL & optimal & 0.09 & 232 &  00 & 0 & 0 & 0 \\
QAFIRO & optimal & 0.00 & 30 & 6.15e-107 & 7 & 0 & 0 \\
QBANDM & optimal & 2.93 & 1164 &  00 & 0 & 0 & 0 \\
QBEACONF & optimal & 0.91 & 305 & 8.07e-110 & 9 & 0 & 0 \\
QBORE3D & optimal & 0.87 & 536 & 1.97e-111 & 9 & 0 & 0 \\
QBRANDY & optimal & 1.07 & 450 & 4.86e-108 & 8 & 0 & 0 \\
QCAPRI & optimal & 2.95 & 1051 & 1.76e-106 & 8 & 0 & 1 \\
QE226 & optimal & 4.66 & 1396 & 1.71e-101 & 8 & 0 & 0 \\
QETAMACR & optimal & 4.75 & 559 & 5.17e-102 & 9 & 0 & 0 \\
QFFFFF80 & error & 44.78 & 1079 & NaN & 9 & 3 & 5 \\
QFORPLAN & optimal & 2.20 & 1174 &  00 & 0 & 0 & 0 \\
QGFRDXPN & optimal & 29.31 & 1423 &  00 & 0 & 0 & 0 \\
QGROW15 & optimal & 8.11 & 632 & 2.54e-105 & 8 & 0 & 0 \\
QGROW22 & optimal & 32.92 & 944 & 2.86e-109 & 10 & 0 & 0 \\
QGROW7 & optimal & 0.83 & 298 & 7.99e-112 & 8 & 0 & 0 \\
QISRAEL & optimal & 0.38 & 290 &  00 & 0 & 0 & 0 \\
QPCBLEND & optimal & 0.06 & 66 &  00 & 1 & 0 & 0 \\
QPCBOEI1 & optimal & 3.64 & 435 &  00 & 1 & 0 & 0 \\
QPCBOEI2 & optimal & 0.23 & 175 &  00 & 0 & 0 & 0 \\
QPCSTAIR & optimal & 2.76 & 257 &  00 & 0 & 0 & 0 \\
QPILOTNO & optimal & 573.39 & 7442 & 1.99e-102 & 13 & 0 & 0 \\
QPTEST & optimal & 0.00 & 1 &  00 & 0 & 0 & 0 \\
QRECIPE & optimal & 0.07 & 48 & 9.74e-110 & 7 & 0 & 0 \\
QSC205 & optimal & 0.78 & 106 & 1.50e-107 & 8 & 0 & 0 \\
QSCAGR25 & optimal & 6.17 & 1076 &  00 & 0 & 0 & 0 \\
QSCAGR7 & optimal & 0.14 & 329 &  00 & 0 & 0 & 0 \\
QSCFXM1 & optimal & 8.05 & 801 & 1.13e-112 & 11 & 0 & 1 \\
QSCFXM2 & optimal & 81.35 & 1891 & 1.25e-108 & 10 & 0 & 0 \\
QSCFXM3 & optimal & 319.56 & 2510 & 1.77e-107 & 10 & 0 & 0 \\
QSCORPIO & optimal & 2.43 & 233 & 1.08e-101 & 7 & 0 & 0 \\
QSCRS8 & optimal & 49.19 & 2016 &  00 & 1 & 0 & 0 \\
QSCSD1 & optimal & 8.78 & 2054 &  00 & 1 & 0 & 0 \\
QSCSD6 & optimal & 106.56 & 6015 & 4.11e-101 & 9 & 0 & 0 \\
QSCSD8 & optimal & 1560.58 & 19564 & 2.22e-104 & 8 & 0 & 0 \\
QSCTAP1 & optimal & 11.22 & 1474 & 2.63e-103 & 8 & 0 & 0 \\
QSCTAP2 & optimal & 860.50 & 3862 & 6.58e-113 & 10 & 0 & 0 \\
QSCTAP3 & optimal & 2156.04 & 6964 & 2.49e-101 & 8 & 0 & 0 \\
QSEBA & optimal & 26.60 & 1677 &  00 & 0 & 0 & 0 \\
QSHARE1B & optimal & 0.51 & 782 &  00 & 1 & 0 & 0 \\
QSHARE2B & optimal & 0.19 & 359 &  00 & 1 & 0 & 0 \\
QSHELL & optimal & 119.14 & 3306 & 7.69e-109 & 9 & 0 & 1 \\
QSHIP04L & optimal & 398.29 & 4847 & 1.67e-106 & 8 & 0 & 0 \\
QSHIP04S & optimal & 130.24 & 2908 & 2.11e-107 & 8 & 0 & 1 \\
QSHIP08L & optimal & 3711.96 & 7911 & 3.68e-102 & 8 & 0 & 0 \\
QSHIP08S & optimal & 738.69 & 3997 & 1.86e-110 & 9 & 0 & 0 \\
QSHIP12L & optimal & 8386.98 & 9765 & 4.22e-109 & 8 & 0 & 1 \\
QSHIP12S & optimal & 1176.10 & 3821 & 2.30e-109 & 8 & 0 & 0 \\
QSIERRA & error & 996.02 & 6421 & NaN & 0 & 0 & 1 \\
QSTAIR & optimal & 13.43 & 1136 & 7.18e-111 & 9 & 1 & 2 \\
QSTANDAT & optimal & 26.45 & 1222 &  00 & 3 & 0 & 0 \\
S268 & inconsistent & 0.00 & 0 &  00 & 0 & 0 & 0 \\
STADAT1 & optimal & 8013.74 & 9995 &  00 & 0 & 0 & 0 \\
STADAT2 & optimal & 6437.65 & 6527 &  00 & 1 & 0 & 0 \\
STADAT3 & timeout & 18337.54 & 2144 & NaN & 0 & 0 & 1 \\
STCQP1 & optimal & 604.13 & 353 & 1.05e-103 & 7 & 0 & 0 \\
STCQP2 & optimal & 419.60 & 105 &  00 & 0 & 0 & 0 \\
TAME & optimal & 0.00 & 0 &  00 & 0 & 0 & 0 \\
VALUES & optimal & 0.10 & 143 &  00 & 2 & 0 & 1 \\
YAO & optimal & 659.58 & 2001 &  00 & 1 & 0 & 0 \\
ZECEVIC2 & optimal & 0.00 & 3 &  00 & 0 & 0 & 0 \\
\end{longtable}
\clearpage{}
}
{\small
\clearpage{}\begin{longtable}[c]{l|l|l|l|l|l|l|l}
\caption{Detailed results for exact solve of large QP set with parameter set~s3 (Iter.=Iterations, Tol.=Tolerance, Ref.=Refinements, Back.=Backstepping, Res.=Resolves)}
\label{tab:resultsexactqorelarge}\\
\toprule
\rowcolor[gray]{0.85}
QP Name & Status & Time & Iter. & Tol. & Ref. & Back. & Res. \\
\rowcolor[gray]{0.85}
        &        & [s]  & [\#]     & [-]       & [\#]      & [\#]   & [\#]  \\
\midrule
\endfirsthead
\toprule
\rowcolor[gray]{0.85}
QP Name & Status & Time & Iter. & Tol. & Ref. & Back. & Res. \\
\rowcolor[gray]{0.85}
        &        & [s]  & [\#]     & [-]       & [\#]      & [\#]   & [\#]  \\
\midrule
\endhead

\midrule
\multicolumn{8}{r}{continued on next page}\\
\bottomrule
\endfoot

\bottomrule
\endlastfoot
AUG2D & timeout & 10865.56 & 0 & NaN & 1 & 0 & 0 \\
AUG2DC & timeout & 10836.45 & 0 & NaN & 6 & 5 & 0 \\
AUG2DCQP & optimal & 155.71 & 0 & 1.47e-101 & 8 & 0 & 0 \\
AUG2DQP & timeout & 10801.84 & 0 & NaN & 6 & 5 & 0 \\
AUG3D & error & 7.60 & 0 & NaN & 1 & 0 & 0 \\
AUG3DC & optimal & 112.66 & 0 &  00 & 0 & 0 & 0 \\
AUG3DCQP & optimal & 19.50 & 0 &  00 & 0 & 0 & 0 \\
AUG3DQP & optimal & 7.62 & 0 &  00 & 0 & 0 & 0 \\
BOYD2 & abort & NaN & NaN & NaN & NaN & NaN & NaN \\
CONT-050 & optimal & 4224.16 & 0 &  00 & 0 & 0 & 0 \\
CONT-100 & timeout & 10806.77 & 0 & NaN & 7 & 6 & 0 \\
CONT-101 & timeout & 10809.28 & 0 & NaN & 7 & 6 & 0 \\
CONT-200 & timeout & 10802.04 & 0 & NaN & 7 & 6 & 0 \\
CONT-201 & timeout & 10801.56 & 0 & NaN & 7 & 6 & 0 \\
CONT-300 & abort & NaN & NaN & NaN & NaN & NaN & NaN \\
CVXQP1\_L & optimal & 6641.26 & 0 & 1.90e-101 & 9 & 0 & 0 \\
CVXQP1\_M & optimal & 8.02 & 0 &  00 & 0 & 0 & 0 \\
CVXQP1\_S & optimal & 0.02 & 0 &  00 & 0 & 0 & 0 \\
CVXQP2\_L & timeout & 11041.07 & 0 & NaN & 5 & 4 & 0 \\
CVXQP2\_M & optimal & 19.13 & 0 &  00 & 0 & 0 & 0 \\
CVXQP2\_S & optimal & 0.03 & 0 & 2.25e-110 & 7 & 0 & 0 \\
CVXQP3\_L & error & 7587.33 & 0 & NaN & 6 & 1 & 0 \\
CVXQP3\_M & optimal & 2.33 & 0 &  00 & 0 & 0 & 0 \\
CVXQP3\_S & reached & 0.48 & 0 & NaN & 50 & 0 & 0 \\
DPKLO1 & optimal & 2.26 & 0 &  00 & 0 & 0 & 0 \\
DTOC3 & optimal & 2611.08 & 0 &  00 & 0 & 0 & 0 \\
DUAL1 & optimal & 0.15 & 0 &  00 & 0 & 0 & 0 \\
DUAL2 & optimal & 0.53 & 0 &  00 & 0 & 0 & 0 \\
DUAL3 & optimal & 0.78 & 0 &  00 & 0 & 0 & 0 \\
DUAL4 & optimal & 0.20 & 0 &  00 & 0 & 0 & 0 \\
DUALC1 & optimal & 0.07 & 0 &  00 & 0 & 0 & 0 \\
DUALC2 & optimal & 0.05 & 0 &  00 & 0 & 0 & 0 \\
DUALC5 & optimal & 0.08 & 0 &  00 & 0 & 0 & 0 \\
DUALC8 & optimal & 0.19 & 0 &  00 & 0 & 0 & 0 \\
EXDATA & optimal & 1407.82 & 0 &  00 & 0 & 0 & 0 \\
GENHS28 & optimal & 0.00 & 0 &  00 & 0 & 0 & 0 \\
GOULDQP2 & inconsistent & 0.24 & 0 & 1.50e-110 & 7 & 0 & 0 \\
GOULDQP3 & optimal & 0.11 & 0 &  00 & 0 & 0 & 0 \\
HS118 & optimal & 0.00 & 0 &  00 & 0 & 0 & 0 \\
HS21 & optimal & 0.00 & 0 &  00 & 0 & 0 & 0 \\
HS268 & inconsistent & 0.00 & 0 &  00 & 0 & 0 & 0 \\
HS35 & optimal & 0.00 & 0 &  00 & 0 & 0 & 0 \\
HS35MOD & optimal & 0.00 & 0 &  00 & 0 & 0 & 0 \\
HS51 & optimal & 0.00 & 0 & 3.14e-108 & 6 & 0 & 0 \\
HS52 & optimal & 0.00 & 0 &  00 & 0 & 0 & 0 \\
HS53 & optimal & 0.00 & 0 &  00 & 0 & 0 & 0 \\
HS76 & optimal & 0.00 & 0 &  00 & 0 & 0 & 0 \\
HUESTIS & inconsistent & 40.17 & 0 &  00 & 0 & 0 & 0 \\
HUES-MOD & inconsistent & 21.27 & 0 &  00 & 0 & 0 & 0 \\
KSIP & optimal & 2.38 & 0 &  00 & 0 & 0 & 0 \\
LASER & optimal & 41.47 & 0 &  00 & 0 & 0 & 0 \\
LISWET1 & optimal & 57.85 & 0 &  00 & 0 & 0 & 0 \\
LISWET10 & optimal & 645.20 & 0 & 1.60e-105 & 15 & 0 & 0 \\
LISWET11 & optimal & 54.26 & 0 &  00 & 0 & 0 & 0 \\
LISWET12 & optimal & 63.22 & 0 &  00 & 0 & 0 & 0 \\
LISWET2 & optimal & 704.57 & 0 & 4.17e-107 & 17 & 0 & 0 \\
LISWET3 & optimal & 306.45 & 0 & 1.20e-101 & 14 & 0 & 0 \\
LISWET4 & optimal & 317.64 & 0 & 2.60e-107 & 15 & 0 & 0 \\
LISWET5 & optimal & 324.42 & 0 & 4.27e-106 & 15 & 0 & 0 \\
LISWET6 & optimal & 325.27 & 0 & 3.68e-102 & 14 & 0 & 0 \\
LISWET7 & optimal & 54.87 & 0 &  00 & 0 & 0 & 0 \\
LISWET8 & inconsistent & 56.98 & 0 &  00 & 0 & 0 & 0 \\
LISWET9 & optimal & 64.09 & 0 &  00 & 0 & 0 & 0 \\
LOTSCHD & optimal & 0.00 & 0 &  00 & 0 & 0 & 0 \\
MOSARQP1 & optimal & 0.96 & 0 &  00 & 0 & 0 & 0 \\
MOSARQP2 & optimal & 0.52 & 0 &  00 & 0 & 0 & 0 \\
POWELL20 & optimal & 44.65 & 0 &  00 & 0 & 0 & 0 \\
PRIMAL1 & optimal & 0.20 & 0 &  00 & 0 & 0 & 0 \\
PRIMAL2 & optimal & 0.71 & 0 &  00 & 0 & 0 & 0 \\
PRIMAL3 & optimal & 1.45 & 0 &  00 & 0 & 0 & 0 \\
PRIMAL4 & optimal & 0.76 & 0 &  00 & 0 & 0 & 0 \\
PRIMALC1 & optimal & 0.01 & 0 &  00 & 0 & 0 & 0 \\
PRIMALC2 & optimal & 0.01 & 0 &  00 & 0 & 0 & 0 \\
PRIMALC5 & optimal & 0.01 & 0 &  00 & 0 & 0 & 0 \\
PRIMALC8 & optimal & 0.02 & 0 &  00 & 0 & 0 & 0 \\
Q25FV47 & optimal & 9.14 & 0 & 3.75e-114 & 9 & 0 & 0 \\
QADLITTL & optimal & 0.01 & 0 &  00 & 0 & 0 & 0 \\
QAFIRO & optimal & 0.00 & 0 &  00 & 0 & 0 & 0 \\
QBANDM & optimal & 0.20 & 0 & 1.17e-103 & 6 & 0 & 0 \\
QBEACONF & inconsistent & 0.16 & 0 & NaN & 50 & 0 & 0 \\
QBORE3D & inconsistent & 0.18 & 0 & NaN & 50 & 0 & 0 \\
QBRANDY & optimal & 0.18 & 0 & 4.04e-106 & 9 & 0 & 0 \\
QCAPRI & optimal & 0.11 & 0 &  00 & 0 & 0 & 0 \\
QE226 & optimal & 0.19 & 0 & 4.36e-107 & 8 & 0 & 0 \\
QETAMACR & optimal & 1.24 & 0 & 3.85e-106 & 10 & 0 & 0 \\
QFFFFF80 & optimal & 0.99 & 0 & 3.05e-114 & 9 & 0 & 0 \\
QFORPLAN & optimal & 0.23 & 0 & 1.88e-113 & 8 & 0 & 0 \\
QGFRDXPN & optimal & 0.45 & 0 & 3.34e-104 & 9 & 0 & 0 \\
QGROW15 & optimal & 0.37 & 0 &  00 & 0 & 0 & 0 \\
QGROW22 & optimal & 0.90 & 0 &  00 & 0 & 0 & 0 \\
QGROW7 & optimal & 0.22 & 0 &  00 & 0 & 0 & 0 \\
QISRAEL & optimal & 0.11 & 0 &  00 & 0 & 0 & 0 \\
QPCBLEND & optimal & 0.03 & 0 &  00 & 0 & 0 & 0 \\
QPCBOEI1 & optimal & 0.58 & 0 & 5.70e-114 & 10 & 0 & 0 \\
QPCBOEI2 & optimal & 0.10 & 0 &  00 & 0 & 0 & 0 \\
QPCSTAIR & optimal & 0.96 & 0 & 1.04e-109 & 11 & 0 & 0 \\
QPILOTNO & inconsistent & 40.49 & 0 & NaN & 50 & 0 & 0 \\
QPTEST & optimal & 0.00 & 0 &  00 & 0 & 0 & 0 \\
QRECIPE & inconsistent & 0.04 & 0 & NaN & 50 & 0 & 0 \\
QSC205 & optimal & 0.30 & 0 & 1.27e-107 & 11 & 0 & 0 \\
QSCAGR25 & optimal & 0.32 & 0 & 1.06e-104 & 7 & 0 & 0 \\
QSCAGR7 & optimal & 0.02 & 0 & 3.71e-107 & 8 & 0 & 0 \\
QSCFXM1 & optimal & 0.20 & 0 & 1.79e-112 & 7 & 0 & 0 \\
QSCFXM2 & optimal & 0.73 & 0 & 5.01e-112 & 9 & 0 & 0 \\
QSCFXM3 & optimal & 1.46 & 0 & 3.59e-101 & 8 & 0 & 0 \\
QSCORPIO & optimal & 0.16 & 0 & 2.31e-101 & 8 & 0 & 0 \\
QSCRS8 & optimal & 0.36 & 0 & 1.58e-108 & 13 & 0 & 0 \\
QSCSD1 & optimal & 0.12 & 0 & 7.72e-110 & 7 & 0 & 0 \\
QSCSD6 & optimal & 0.18 & 0 & 2.69e-111 & 7 & 0 & 0 \\
QSCSD8 & optimal & 1.16 & 0 &  00 & 0 & 0 & 0 \\
QSCTAP1 & optimal & 0.18 & 0 & 6.15e-106 & 7 & 0 & 0 \\
QSCTAP2 & optimal & 0.55 & 0 & 5.86e-111 & 6 & 0 & 0 \\
QSCTAP3 & optimal & 1.04 & 0 &  00 & 0 & 0 & 0 \\
QSEBA & optimal & 0.24 & 0 & 1.15e-103 & 9 & 0 & 0 \\
QSHARE1B & optimal & 0.07 & 0 &  00 & 0 & 0 & 0 \\
QSHARE2B & optimal & 0.04 & 0 &  00 & 0 & 0 & 0 \\
QSHELL & optimal & 0.82 & 0 & 3.16e-103 & 9 & 0 & 0 \\
QSHIP04L & optimal & 0.60 & 0 & 7.13e-110 & 9 & 0 & 0 \\
QSHIP04S & optimal & 0.43 & 0 & 5.88e-109 & 9 & 0 & 0 \\
QSHIP08L & optimal & 2.14 & 0 & 2.31e-110 & 7 & 0 & 0 \\
QSHIP08S & optimal & 1.21 & 0 & 2.31e-108 & 9 & 0 & 0 \\
QSHIP12L & optimal & 2.78 & 0 & 3.71e-113 & 8 & 0 & 0 \\
QSHIP12S & optimal & 3.00 & 0 & 2.82e-104 & 9 & 0 & 0 \\
QSIERRA & error & 0.95 & 0 & NaN & 6 & 0 & 0 \\
QSTAIR & optimal & 0.58 & 0 & 4.32e-101 & 8 & 0 & 0 \\
QSTANDAT & optimal & 0.06 & 0 & 3.62e-107 & 7 & 0 & 0 \\
S268 & inconsistent & 0.00 & 0 &  00 & 0 & 0 & 0 \\
STADAT1 & optimal & 9.82 & 0 &  00 & 0 & 0 & 0 \\
STADAT2 & optimal & 9.89 & 0 &  00 & 0 & 0 & 0 \\
STADAT3 & optimal & 40.23 & 0 &  00 & 0 & 0 & 0 \\
STCQP1 & optimal & 47.09 & 0 & 1.91e-107 & 11 & 0 & 0 \\
STCQP2 & optimal & 25.77 & 0 & 5.65e-102 & 9 & 0 & 0 \\
TAME & optimal & 0.00 & 0 &  00 & 0 & 0 & 0 \\
UBH1 & optimal & 136.35 & 0 &  00 & 0 & 0 & 0 \\
VALUES & optimal & 0.03 & 0 &  00 & 0 & 0 & 0 \\
YAO & optimal & 1.53 & 0 &  00 & 0 & 0 & 0 \\
ZECEVIC2 & optimal & 0.00 & 0 &  00 & 0 & 0 & 0 \\
\end{longtable}
\clearpage{}
}
{\small
\clearpage{}\begin{longtable}{l|l|l|l|l|l|l|l}
\caption{Detailed results for exact solve of large QP set with parameter set~s4 (Iter.=Iterations, Tol.=Tolerance, Ref.=Refinements, Back.=Backstepping, Res.=Resolves)}
\label{tab:resultsexactqorenofaclarge}\\
\toprule
\rowcolor[gray]{0.85}
QP Name & Status & Time & Iter. & Tol. & Ref. & Back. & Res. \\
\rowcolor[gray]{0.85}
&        & [s]  & [\#]     & [-]       & [\#]      & [\#]   & [\#]  \\
\midrule
\endfirsthead
\toprule
\rowcolor[gray]{0.85}
QP Name & Status & Time & Iter. & Tol. & Ref. & Back. & Res. \\
\rowcolor[gray]{0.85}
&        & [s]  & [\#]     & [-]       & [\#]      & [\#]   & [\#]  \\
\midrule
\endhead

\midrule
\multicolumn{8}{r}{continued on next page}\\
\bottomrule
\endfoot

\bottomrule
\endlastfoot
AUG2D & error & 55.14 & 0 & NaN & 1 & 0 & 0 \\
AUG2DC & optimal & 56.77 & 0 & 4.76e-109 & 7 & 0 & 0 \\
AUG2DCQP & optimal & 154.49 & 0 & 1.47e-101 & 8 & 0 & 0 \\
AUG2DQP & optimal & 320.54 & 0 & 3.10e-107 & 7 & 0 & 0 \\
AUG3D & error & 1.54 & 0 & NaN & 1 & 0 & 0 \\
AUG3DC & optimal & 6.38 & 0 & 7.59e-108 & 7 & 0 & 0 \\
AUG3DCQP & optimal & 4.73 & 0 & 1.52e-102 & 6 & 0 & 0 \\
AUG3DQP & optimal & 3.31 & 0 & 8.00e-103 & 6 & 0 & 0 \\
BOYD2 & abort & NaN & NaN & NaN & NaN & NaN & NaN \\
CONT-050 & optimal & 53.31 & 0 & 2.25e-108 & 8 & 0 & 0 \\
CONT-100 & optimal & 1076.87 & 0 & 2.86e-102 & 7 & 0 & 0 \\
CONT-101 & optimal & 1029.04 & 0 & 5.89e-101 & 7 & 0 & 0 \\
CONT-200 & timeout & 13506.53 & 0 & NaN & 4 & 0 & 0 \\
CONT-201 & timeout & 11449.02 & 0 & NaN & 3 & 0 & 0 \\
CONT-300 & abort & NaN & NaN & NaN & NaN & NaN & NaN \\
CVXQP1\_L & optimal & 6853.92 & 0 & 1.90e-101 & 9 & 0 & 0 \\
CVXQP1\_M & optimal & 5.74 & 0 & 4.34e-107 & 9 & 0 & 0 \\
CVXQP1\_S & optimal & 0.05 & 0 & 8.55e-102 & 8 & 0 & 0 \\
CVXQP2\_L & optimal & 550.82 & 0 & 9.00e-109 & 9 & 0 & 0 \\
CVXQP2\_M & optimal & 0.98 & 0 & 1.12e-108 & 9 & 0 & 0 \\
CVXQP2\_S & optimal & 0.02 & 0 & 2.25e-110 & 7 & 0 & 0 \\
CVXQP3\_L & error & 7293.07 & 0 & NaN & 6 & 1 & 0 \\
CVXQP3\_M & optimal & 18.06 & 0 & 3.73e-111 & 9 & 0 & 0 \\
CVXQP3\_S & reached & 0.45 & 0 & NaN & 50 & 0 & 0 \\
DPKLO1 & optimal & 0.09 & 0 & 8.39e-108 & 9 & 0 & 0 \\
DTOC3 & optimal & 112.18 & 0 & 1.48e-114 & 7 & 0 & 0 \\
DUAL1 & optimal & 0.07 & 0 & 3.21e-103 & 10 & 0 & 0 \\
DUAL2 & optimal & 0.06 & 0 & 1.54e-101 & 6 & 0 & 0 \\
DUAL3 & optimal & 0.06 & 0 & 2.48e-103 & 6 & 0 & 0 \\
DUAL4 & optimal & 0.04 & 0 & 1.37e-104 & 6 & 0 & 0 \\
DUALC1 & optimal & 0.05 & 0 & 8.05e-106 & 6 & 0 & 0 \\
DUALC2 & optimal & 0.07 & 0 & 1.44e-106 & 6 & 0 & 0 \\
DUALC5 & optimal & 0.06 & 0 & 6.55e-107 & 6 & 0 & 0 \\
DUALC8 & optimal & 0.22 & 0 & 7.30e-105 & 6 & 0 & 0 \\
EXDATA & error & 560.71 & 0 & NaN & 5 & 0 & 0 \\
GENHS28 & optimal & 0.00 & 0 & 1.25e-110 & 6 & 0 & 0 \\
GOULDQP2 & inconsistent & 0.22 & 0 & 1.50e-110 & 7 & 0 & 0 \\
GOULDQP3 & optimal & 0.36 & 0 & 4.48e-108 & 11 & 0 & 0 \\
HS118 & inconsistent & 0.02 & 0 & NaN & 50 & 0 & 0 \\
HS21 & optimal & 0.00 & 0 &  00 & 0 & 0 & 0 \\
HS268 & inconsistent & 0.00 & 0 & 6.35e-112 & 9 & 0 & 0 \\
HS35 & optimal & 0.00 & 0 & 9.00e-106 & 6 & 0 & 0 \\
HS35MOD & optimal & 0.00 & 0 & 4.03e-108 & 6 & 0 & 0 \\
HS51 & optimal & 0.00 & 0 & 3.14e-108 & 6 & 0 & 0 \\
HS52 & optimal & 0.00 & 0 & 4.97e-106 & 6 & 0 & 0 \\
HS53 & optimal & 0.00 & 0 & 2.34e-108 & 6 & 0 & 0 \\
HS76 & optimal & 0.00 & 0 & 3.83e-108 & 6 & 0 & 0 \\
HUESTIS & inconsistent & 37.99 & 0 & 6.05e-104 & 7 & 0 & 0 \\
HUES-MOD & inconsistent & 18.39 & 0 & 4.21e-108 & 7 & 0 & 0 \\
KSIP & optimal & 2.11 & 0 & 3.89e-105 & 6 & 0 & 0 \\
LASER & optimal & 2.94 & 0 & 7.30e-105 & 12 & 0 & 0 \\
LISWET1 & optimal & 768.27 & 0 & 1.01e-107 & 15 & 0 & 0 \\
LISWET10 & optimal & 638.05 & 0 & 1.60e-105 & 15 & 0 & 0 \\
LISWET11 & optimal & 586.91 & 0 & 4.47e-103 & 13 & 0 & 0 \\
LISWET12 & optimal & 574.81 & 0 & 6.47e-106 & 12 & 0 & 0 \\
LISWET2 & optimal & 699.25 & 0 & 4.17e-107 & 17 & 0 & 0 \\
LISWET3 & optimal & 299.09 & 0 & 1.20e-101 & 14 & 0 & 0 \\
LISWET4 & optimal & 309.00 & 0 & 2.60e-107 & 15 & 0 & 0 \\
LISWET5 & optimal & 312.78 & 0 & 4.27e-106 & 15 & 0 & 0 \\
LISWET6 & optimal & 316.60 & 0 & 3.68e-102 & 14 & 0 & 0 \\
LISWET7 & optimal & 1189.62 & 0 & 3.11e-108 & 37 & 0 & 0 \\
LISWET8 & inconsistent & 659.38 & 0 & 1.63e-104 & 15 & 0 & 0 \\
LISWET9 & optimal & 671.45 & 0 & 2.04e-104 & 10 & 0 & 0 \\
LOTSCHD & optimal & 0.00 & 0 & 2.70e-107 & 6 & 0 & 0 \\
MOSARQP1 & optimal & 2.14 & 0 & 2.01e-115 & 10 & 0 & 0 \\
MOSARQP2 & optimal & 0.83 & 0 & 1.11e-107 & 9 & 0 & 0 \\
POWELL20 & optimal & 127.81 & 0 & 2.82e-110 & 9 & 0 & 0 \\
PRIMAL1 & optimal & 0.14 & 0 & 1.35e-112 & 7 & 0 & 0 \\
PRIMAL2 & optimal & 0.27 & 0 & 2.38e-103 & 7 & 0 & 0 \\
PRIMAL3 & optimal & 0.39 & 0 & 1.38e-102 & 6 & 0 & 0 \\
PRIMAL4 & optimal & 0.72 & 0 & 1.06e-112 & 7 & 0 & 0 \\
PRIMALC1 & optimal & 0.04 & 0 & 1.56e-104 & 6 & 0 & 0 \\
PRIMALC2 & optimal & 0.01 & 0 & 8.59e-107 & 6 & 0 & 0 \\
PRIMALC5 & optimal & 0.01 & 0 & 1.63e-103 & 6 & 0 & 0 \\
PRIMALC8 & inconsistent & 0.15 & 0 & NaN & 50 & 0 & 0 \\
Q25FV47 & optimal & 9.15 & 0 & 3.75e-114 & 9 & 0 & 0 \\
QADLITTL & optimal & 0.03 & 0 & 2.11e-111 & 7 & 0 & 0 \\
QAFIRO & error & 0.01 & 0 & NaN & 9 & 0 & 0 \\
QBANDM & optimal & 0.24 & 0 & 1.17e-103 & 6 & 0 & 0 \\
QBEACONF & inconsistent & 0.21 & 0 & NaN & 50 & 0 & 0 \\
QBORE3D & inconsistent & 0.15 & 0 & NaN & 50 & 0 & 0 \\
QBRANDY & optimal & 0.18 & 0 & 4.04e-106 & 9 & 0 & 0 \\
QCAPRI & optimal & 0.27 & 0 & 8.03e-116 & 10 & 0 & 0 \\
QE226 & optimal & 0.21 & 0 & 4.36e-107 & 8 & 0 & 0 \\
QETAMACR & optimal & 1.24 & 0 & 3.85e-106 & 10 & 0 & 0 \\
QFFFFF80 & optimal & 0.97 & 0 & 3.05e-114 & 9 & 0 & 0 \\
QFORPLAN & optimal & 0.23 & 0 & 1.88e-113 & 8 & 0 & 0 \\
QGFRDXPN & optimal & 0.41 & 0 & 3.34e-104 & 9 & 0 & 0 \\
QGROW15 & optimal & 0.32 & 0 & 2.68e-107 & 9 & 0 & 0 \\
QGROW22 & optimal & 0.58 & 0 & 1.77e-107 & 9 & 0 & 0 \\
QGROW7 & optimal & 0.10 & 0 & 4.66e-109 & 9 & 0 & 0 \\
QISRAEL & optimal & 0.19 & 0 & 9.15e-104 & 9 & 0 & 0 \\
QPCBLEND & optimal & 0.03 & 0 & 4.29e-114 & 9 & 0 & 0 \\
QPCBOEI1 & optimal & 0.50 & 0 & 5.70e-114 & 10 & 0 & 0 \\
QPCBOEI2 & optimal & 0.12 & 0 & 6.52e-105 & 9 & 0 & 0 \\
QPCSTAIR & optimal & 0.95 & 0 & 1.04e-109 & 11 & 0 & 0 \\
QPILOTNO & inconsistent & 40.22 & 0 & NaN & 50 & 0 & 0 \\
QPTEST & optimal & 0.00 & 0 & 2.66e-110 & 6 & 0 & 0 \\
QRECIPE & inconsistent & 0.02 & 0 & NaN & 50 & 0 & 0 \\
QSC205 & optimal & 0.27 & 0 & 1.27e-107 & 11 & 0 & 0 \\
QSCAGR25 & optimal & 0.32 & 0 & 1.06e-104 & 7 & 0 & 0 \\
QSCAGR7 & optimal & 0.02 & 0 & 3.71e-107 & 8 & 0 & 0 \\
QSCFXM1 & optimal & 0.16 & 0 & 1.79e-112 & 7 & 0 & 0 \\
QSCFXM2 & optimal & 0.68 & 0 & 5.01e-112 & 9 & 0 & 0 \\
QSCFXM3 & optimal & 1.50 & 0 & 3.59e-101 & 8 & 0 & 0 \\
QSCORPIO & optimal & 0.16 & 0 & 2.31e-101 & 8 & 0 & 0 \\
QSCRS8 & optimal & 0.36 & 0 & 1.58e-108 & 13 & 0 & 0 \\
QSCSD1 & optimal & 0.09 & 0 & 7.72e-110 & 7 & 0 & 0 \\
QSCSD6 & optimal & 0.23 & 0 & 2.69e-111 & 7 & 0 & 0 \\
QSCSD8 & optimal & 8.62 & 0 & 2.23e-106 & 10 & 1 & 0 \\
QSCTAP1 & optimal & 0.19 & 0 & 6.15e-106 & 7 & 0 & 0 \\
QSCTAP2 & optimal & 0.55 & 0 & 5.86e-111 & 6 & 0 & 0 \\
QSCTAP3 & optimal & 0.92 & 0 & 7.96e-110 & 6 & 0 & 0 \\
QSEBA & optimal & 0.30 & 0 & 1.15e-103 & 9 & 0 & 0 \\
QSHARE1B & optimal & 0.04 & 0 & 5.84e-112 & 7 & 0 & 0 \\
QSHARE2B & optimal & 0.05 & 0 & 1.65e-105 & 9 & 0 & 0 \\
QSHELL & optimal & 0.80 & 0 & 3.16e-103 & 9 & 0 & 0 \\
QSHIP04L & optimal & 0.58 & 0 & 7.13e-110 & 9 & 0 & 0 \\
QSHIP04S & optimal & 0.43 & 0 & 5.88e-109 & 9 & 0 & 0 \\
QSHIP08L & optimal & 2.06 & 0 & 2.31e-110 & 7 & 0 & 0 \\
QSHIP08S & optimal & 1.18 & 0 & 2.31e-108 & 9 & 0 & 0 \\
QSHIP12L & optimal & 2.67 & 0 & 3.71e-113 & 8 & 0 & 0 \\
QSHIP12S & optimal & 2.92 & 0 & 2.82e-104 & 9 & 0 & 0 \\
QSIERRA & error & 0.95 & 0 & NaN & 6 & 0 & 0 \\
QSTAIR & optimal & 0.66 & 0 & 4.32e-101 & 8 & 0 & 0 \\
QSTANDAT & optimal & 0.11 & 0 & 3.62e-107 & 7 & 0 & 0 \\
S268 & inconsistent & 0.00 & 0 & 6.35e-112 & 9 & 0 & 0 \\
STADAT1 & optimal & 15.69 & 0 & 2.28e-107 & 8 & 0 & 0 \\
STADAT2 & optimal & 16.69 & 0 & 2.33e-110 & 11 & 0 & 0 \\
STADAT3 & optimal & 116.94 & 0 & 8.80e-104 & 12 & 1 & 0 \\
STCQP1 & optimal & 47.31 & 0 & 1.91e-107 & 11 & 0 & 0 \\
STCQP2 & optimal & 25.51 & 0 & 5.65e-102 & 9 & 0 & 0 \\
TAME & optimal & 0.00 & 0 & 3.12e-112 & 6 & 0 & 0 \\
UBH1 & optimal & 203.99 & 0 & 3.28e-111 & 7 & 0 & 0 \\
VALUES & inconsistent & 0.08 & 0 & NaN & 50 & 0 & 0 \\
YAO & optimal & 23.19 & 0 & 1.58e-106 & 10 & 0 & 0 \\
ZECEVIC2 & optimal & 0.00 & 0 & 2.66e-110 & 6 & 0 & 0 \\
\end{longtable}
\clearpage{}
}
\begin{landscape}
{\small
\clearpage{}\begin{longtable}[c]{l|lll|lll|lll|lll}
\caption{Detailed results for inexact and fast solves of medium QP set with parameter set s5 and the three standard \texttt{qpOASES} option sets (Iter.=Iterations, Tol.=Tolerance)}
\label{tab:resultsinexactlarge}\\
\toprule
\rowcolor[gray]{0.85}
        & \multicolumn{3}{c|}{Refinement} & \multicolumn{9}{|c}{\texttt{qpOASES} with standard settings:}                                      \\
\rowcolor[gray]{0.85}
        & \multicolumn{3}{c|}{Tol. 1e-12} & \multicolumn{3}{|c}{MPC}   & \multicolumn{3}{c}{Default} & \multicolumn{3}{c}{Reliable} \\
\midrule
\rowcolor[gray]{0.85}
QP Name & Time & Iter. (Ref.)  & Tol.       & Time & Iter. & Tol.         & Time & Iter. & Tol.          & Time & Iter. & Tol.           \\
\rowcolor[gray]{0.85}
        & [s]  & [\#] ([\#]) & [-]        & [s]  & [\#] & [-]          & [s]  & [\#] & [-]           & [s]  & [\#] & [-]            \\
\midrule
\endfirsthead
\toprule
\rowcolor[gray]{0.85}
        & \multicolumn{3}{c|}{Refinement} & \multicolumn{9}{|c}{\texttt{qpOASES} with standard settings:}                                      \\
\rowcolor[gray]{0.85}
        & \multicolumn{3}{c|}{Tol. 1e-12} & \multicolumn{3}{|c}{MPC}   & \multicolumn{3}{c}{Default} & \multicolumn{3}{c}{Reliable} \\
\midrule
\rowcolor[gray]{0.85}
QP Name & Time & Iter. (Ref.)  & Tol.       & Time & Iter. & Tol.         & Time & Iter. & Tol.          & Time & Iter. & Tol.           \\
\rowcolor[gray]{0.85}
        & [s]  & [\#] ([\#]) & [-]        & [s]  & [\#] & [-]          & [s]  & [\#] & [-]           & [s]  & [\#] & [-]            \\
\midrule
\endhead

\midrule
\multicolumn{13}{r}{continued on next page}\\
\bottomrule
\endfoot

\bottomrule
\endlastfoot
CVXQP1\_M & 9.53 & 412 (1) & 8.50e-21 & 6.94 & 404 & 2.17e-10 & 23.92 & 1705 & 2.21e-11 & 169.24 & 1705 & 2.34e-11 \\
CVXQP1\_S & 0.02 & 36 (0) & 2.73e-13 & 0.02 & 36 & 4.05e-12 & 0.02 & 139 & 4.39e-13 & 0.06 & 139 & 3.88e-13 \\
CVXQP2\_M & 10.50 & 651 (0) & 1.05e-12 & 9.48 & 663 & 5.65e-11 & 6.84 & 712 & 1.50e-12 & 44.61 & 713 & 8.41e-13 \\
CVXQP2\_S & 0.02 & 62 (0) & 7.47e-14 & 0.02 & 60 & 1.29e-11 & 0.01 & 68 & 9.94e-14 & 0.01 & 68 & 5.82e-14 \\
CVXQP3\_M & 4.52 & 231 (2) & 8.10e-22 & 3.86 & 229 & 5.40e-09 & 79.12 & 3867 & 3.95e-10 & 307.04 & 3869 & 3.20e-10 \\
CVXQP3\_S & 0.02 & 22 (0) & 7.18e-12 & 0.02 & 24 & 5.30e-11 & 0.07 & 189 & 4.73e-13 & 0.05 & 181 & 2.50e-13 \\
DPKLO1 & 0.02 & 0 (0) & 2.67e-14 & 0.02 & 0 & 2.67e-14 & 0.14 & 321 & 5.75e-15 & 0.31 & 321 & 7.32e-15 \\
DUAL1 & 0.03 & 26 (1) & 5.14e-17 & 0.02 & 28 & 1.09e-11 & 0.04 & 75 & 8.78e-16 & 0.02 & 75 & 7.06e-16 \\
DUAL2 & 0.04 & 4 (1) & 6.32e-17 & 0.03 & 4 & 3.05e-11 & 0.05 & 92 & 6.06e-16 & 0.05 & 92 & 7.44e-16 \\
DUAL3 & 0.05 & 14 (1) & 2.53e-17 & 0.03 & 14 & 1.68e-12 & 0.06 & 97 & 1.03e-15 & 0.06 & 97 & 9.27e-16 \\
DUAL4 & 0.01 & 13 (0) & 1.01e-15 & 0.01 & 13 & 1.51e-11 & 0.03 & 62 & 1.04e-15 & 0.02 & 62 & 8.03e-16 \\
DUALC1 & 0.08 & 31 (0) & 4.40e-12 & 0.01 & 31 & 2.18e-10 & 0.01 & 4 & 6.20e-13 & 0.00 & 4 & 6.25e-13 \\
DUALC2 & 0.14 & 28 (1) & 1.91e-21 & 0.01 & 30 & 2.17e-09 & 0.00 & 5 & 2.33e-13 & 0.00 & 5 & 2.73e-13 \\
DUALC5 & 0.10 & 3 (0) & 3.32e-13 & 0.01 & 3 & 1.30e-09 & 0.01 & 5 & 2.68e-13 & 0.00 & 5 & 8.70e-14 \\
DUALC8 & 0.60 & 13 (1) & 6.10e-19 & 0.02 & 11 & 8.95e-10 & 0.01 & 6 & 5.15e-11 & 0.01 & 0 &  NaN \\
GENHS28 & 0.00 & 0 (0) & 3.47e-16 & 0.00 & 0 & 3.47e-16 & 0.00 & 14 & 6.80e-16 & 0.00 & 14 & 5.13e-16 \\
GOULDQP2 & 6.53 & 585 (2) & 9.00e-21 & 3.77 & 573 & 7.77e-14 & 15.44 & 2409 & 1.13e-09 & 247.95 & 2409 & 1.13e-09 \\
GOULDQP3 & 1.68 & 176 (0) & 7.52e-15 & 1.54 & 176 & 5.94e-13 & 2.98 & 740 & 8.40e-15 & 17.52 & 740 & 8.16e-15 \\
HS118 & 0.00 & 24 (0) & 4.71e-15 & 0.00 & 24 & 6.76e-13 & 0.00 & 27 & 6.92e-15 & 0.00 & 27 & 5.56e-15 \\
HS21 & 0.00 & 3 (0) & 8.80e-16 & 0.00 & 3 & 1.28e-17 & 0.00 & 1 & 5.55e-17 & 0.00 & 1 & 5.55e-17 \\
HS268 & 0.00 & 0 (1) & 1.69e-16 & 0.00 & 0 & 2.62e-12 & 0.00 & 11 & 8.60e-07 & 0.00 & 12 & 7.87e-13 \\
HS35 & 0.00 & 1 (0) & 3.61e-16 & 0.00 & 1 & 2.22e-16 & 0.00 & 4 & 7.77e-16 & 0.00 & 4 & 3.06e-16 \\
HS35MOD & 0.00 & 0 (0) & 1.01e-15 & 0.00 & 0 & 9.98e-16 & 0.00 & 4 & 1.66e-16 & 0.00 & 4 & 1.66e-16 \\
HS51 & 0.00 & 0 (0) & 2.54e-16 & 0.00 & 0 & 2.54e-16 & 0.00 & 5 & 8.16e-15 & 0.00 & 5 & 8.16e-15 \\
HS52 & 0.00 & 0 (0) & 1.22e-15 & 0.00 & 0 & 1.22e-15 & 0.00 & 10 & 2.29e-15 & 0.00 & 10 & 1.13e-15 \\
HS53 & 0.00 & 0 (0) & 3.89e-16 & 0.00 & 0 & 3.89e-16 & 0.00 & 9 & 1.00e-15 & 0.00 & 9 & 8.30e-16 \\
HS76 & 0.00 & 4 (0) & 3.16e-17 & 0.00 & 4 & 2.78e-16 & 0.00 & 4 & 6.36e-16 & 0.00 & 4 & 6.36e-16 \\
KSIP & 24.26 & 1080 (1) & 1.63e-18 & 0.15 & 1088 & 3.61e-15 & 0.21 & 1019 & 1.89e-16 & 0.20 & 1019 & 3.68e-17 \\
LOTSCHD & 0.00 & 5 (0) & 8.82e-15 & 0.00 & 5 & 5.30e-13 & 0.00 & 18 & 1.82e-14 & 0.00 & 18 & 2.27e-14 \\
MOSARQP2 & 18.25 & 332 (0) & 1.18e-15 & 2.54 & 332 & 6.33e-13 & 8.99 & 1012 & 1.83e-15 & 317.07 & 1012 & 1.32e-15 \\
PRIMAL1 & 0.38 & 77 (0) & 1.92e-16 & 0.17 & 75 & 1.76e-11 & 0.50 & 399 & 2.29e-16 & 6.14 & 399 & 1.05e-15 \\
PRIMAL2 & 3.49 & 94 (0) & 8.16e-16 & 0.52 & 96 & 4.02e-11 & 3.04 & 742 & 1.45e-15 & 85.84 & 742 & 1.68e-15 \\
PRIMAL3 & 2.95 & 117 (0) & 1.21e-15 & 0.69 & 101 & 4.77e-11 & 4.86 & 841 & 1.48e-15 & 134.65 & 841 & 1.83e-16 \\
PRIMALC1 & 0.17 & 234 (1) & 2.69e-22 & 0.10 & 222 & 2.47e-09 & 0.02 & 27 & 6.50e-13 & 0.01 & 27 & 1.01e-12 \\
PRIMALC2 & 0.16 & 237 (1) & 1.01e-22 & 0.11 & 237 & 6.00e-13 & 0.00 & 10 & 8.05e-16 & 0.00 & 10 & 1.66e-13 \\
PRIMALC5 & 0.27 & 288 (1) & 7.11e-23 & 0.17 & 292 & 3.15e-10 & 0.03 & 23 & 2.50e-14 & 0.02 & 23 & 2.58e-14 \\
PRIMALC8 & 2.76 & 515 (1) & 3.87e-17 & 0.70 & 519 & 2.34e-09 & 0.07 & 25 & 1.91e-14 & 0.05 & 26 & 4.08e-12 \\
QADLITTL & 0.05 & 234 (1) & 1.92e-17 & 0.03 & 189 & 5.93e-09 & 0.02 & 132 & 3.31e-13 & 0.02 & 124 & 1.39e-10 \\
QAFIRO & 0.00 & 30 (0) & 5.49e-15 & 0.00 & 29 & 8.37e-11 & 0.00 & 16 & 3.48e-15 & 0.00 & 16 & 3.48e-15 \\
QBANDM & 2.85 & 1164 (0) & 1.54e-13 & 1.59 & 870 & 1.61e-08 & 5.43 & 1512 & 2.23e-13 & 7.69 & 1512 & 9.62e-14 \\
QBEACONF & 0.43 & 304 (2) & 1.18e-18 & 0.24 & 302 & 2.44e-08 & 0.09 & 133 & 1.30e-11 & 0.09 & 133 & 1.31e-11 \\
QBORE3D & 0.78 & 522 (1) & 5.27e-13 & 0.25 & 155 &  NaN & 0.30 & 221 & 4.61e-13 & 0.31 & 221 & 2.76e-11 \\
QBRANDY & 0.56 & 444 (0) & 4.69e-12 & 0.28 & 414 & 2.53e-08 & 0.85 & 854 & 2.43e-13 & 1.16 & 854 & 3.13e-13 \\
QCAPRI & 2.69 & 1050 (1) & 3.62e-19 & 1.06 & 1008 & 9.68e-08 & 0.64 & 457 & 4.83e-10 & 0.91 & 457 & 4.34e-10 \\
QE226 & 3.24 & 1393 (1) & 1.07e-13 & 0.95 & 1431 & 2.59e-08 & 1.17 & 825 & 2.36e-14 & 2.07 & 813 & 3.78e-14 \\
QETAMACR & 4.12 & 558 (2) & 6.05e-16 & 2.15 & 493 & 2.14e-09 & 7.86 & 1354 & 2.30e-07 & 23.13 & 1354 & 2.30e-07 \\
QFFFFF80 & 19.05 & 1008 (1) & 2.26e-13 & 7.76 & 1005 & 9.90e-08 & 20.85 & 2293 & 1.59e-11 & 73.22 & 1722 &  NaN \\
QFORPLAN & 2.21 & 1180 (1) & 7.31e-22 & 1.53 & 1930 & 2.39e+06 & 1.28 & 796 & 8.03e-10 & 1.81 & 804 & 7.13e-10 \\
QGROW15 & 4.07 & 629 (1) & 8.55e-18 & 2.66 & 531 & 1.04e-07 & 3.29 & 600 & 4.28e-09 & 4.17 & 589 & 6.40e-09 \\
QGROW22 & 15.30 & 934 (2) & 1.52e-20 & 7.62 & 621 & 1.14e-07 & 10.71 & 888 & 3.69e-07 & 14.64 & 881 & 4.97e-08 \\
QGROW7 & 0.45 & 296 (1) & 1.56e-19 & 0.28 & 266 & 5.07e-08 & 0.42 & 340 & 1.88e-09 & 0.45 & 298 & 3.22e-09 \\
QISRAEL & 0.41 & 290 (0) & 3.11e-12 & 0.11 & 360 & 3.99e-08 & 0.08 & 258 & 4.81e-12 & 0.12 & 258 & 5.03e-12 \\
QPCBLEND & 0.03 & 66 (1) & 2.25e-16 & 0.01 & 64 & 8.38e-14 & 0.02 & 176 & 7.56e-16 & 0.02 & 176 & 9.67e-16 \\
QPCBOEI1 & 3.18 & 435 (2) & 2.12e-13 & 1.08 & 441 & 2.15e-09 & 1.32 & 652 & 2.74e-11 & 7.68 & 652 & 3.20e-11 \\
QPCBOEI2 & 0.30 & 175 (0) & 6.07e-11 & 0.07 & 177 & 7.37e-10 & 0.07 & 224 & 4.21e-10 & 0.16 & 224 & 3.00e-10 \\
QPCSTAIR & 1.32 & 257 (0) & 1.58e-11 & 0.54 & 239 & 9.26e-11 & 2.29 & 908 & 8.77e-12 & 3.67 & 908 & 8.06e-12 \\
QPTEST & 0.00 & 1 (0) & 7.40e-16 & 0.00 & 1 & 1.22e-15 & 0.00 & 2 & 3.89e-16 & 0.00 & 2 & 3.89e-16 \\
QRECIPE & 0.07 & 48 (0) & 5.90e-15 & 0.02 & 42 & 1.04e-10 & 0.02 & 88 & 1.64e-14 & 0.02 & 88 & 1.64e-14 \\
QSC205 & 0.29 & 93 (1) & 4.22e-18 & 0.04 & 43 & 7.63e-08 & 0.08 & 215 & 3.84e-16 & 0.08 & 215 & 4.35e-16 \\
QSCAGR25 & 7.38 & 1077 (2) & 1.77e-18 & 36.34 & 0 &  NaN & 6.12 & 1327 & 1.09e-11 & 8.88 & 1299 & 1.43e-11 \\
QSCAGR7 & 0.19 & 329 (0) & 3.88e-12 & 0.06 & 349 & 3.18e-10 & 0.22 & 418 & 4.58e-12 & 0.22 & 418 & 5.03e-12 \\
QSCFXM1 & 4.18 & 799 (2) & 1.25e-15 & 1.73 & 988 & 1.32e-07 & 1.69 & 580 & 9.14e-12 & 2.69 & 612 & 1.48e-11 \\
QSCFXM2 & 42.74 & 1890 (2) & 3.82e-18 & 16.53 & 2189 & 1.62e-07 & 17.45 & 1454 & 6.84e-11 & 26.22 & 1370 & 3.92e-11 \\
QSCORPIO & 0.87 & 233 (0) & 5.22e-14 & 0.12 & 0 &  NaN & 0.87 & 470 & 3.20e-11 & 1.19 & 469 & 1.06e-11 \\
QSCSD1 & 8.81 & 2054 (1) & 3.44e-22 & 6.51 & 1075 & 8.77e-11 & 0.64 & 204 & 4.77e-13 & 0.57 & 153 & 1.38e-10 \\
QSCTAP1 & 6.77 & 1374 (1) & 7.45e-15 & 30.15 & 0 &  NaN & 1.00 & 500 & 9.20e-12 & 1.44 & 503 & 1.54e-08 \\
QSHARE1B & 0.59 & 782 (2) & 1.25e-16 & 0.18 & 468 & 2.49e-08 & 0.34 & 517 & 1.21e-09 & 0.51 & 497 & 3.70e-11 \\
QSHARE2B & 0.12 & 359 (1) & 2.00e-13 & 0.02 & 355 & 5.33e-10 & 0.06 & 207 & 1.35e-12 & 0.05 & 196 & 1.25e-12 \\
QSTAIR & 3.18 & 792 (0) & 6.37e-12 & 1.28 & 784 & 1.92e-08 & 1.68 & 740 & 4.39e-12 & 2.67 & 740 & 4.41e-12 \\
S268 & 0.00 & 0 (1) & 1.69e-16 & 0.00 & 0 & 2.62e-12 & 0.00 & 11 & 8.60e-07 & 0.00 & 12 & 7.87e-13 \\
TAME & 0.00 & 0 (0) & 1.11e-16 & 0.00 & 0 & 1.11e-16 & 0.00 & 2 & 1.95e-16 & 0.00 & 2 & 1.95e-16 \\
VALUES & 0.10 & 142 (0) & 1.46e-12 & 0.04 & 0 &  NaN & 0.05 & 142 & 4.29e-16 & 0.04 & 142 & 2.45e-15 \\
ZECEVIC2 & 0.00 & 3 (0) & 1.25e-16 & 0.00 & 3 & 1.15e-12 & 0.00 & 2 & 2.22e-16 & 0.00 & 2 & 2.22e-16 \\
\end{longtable}
\clearpage{}
}
\end{landscape}

\end{document}